\begin{document}
\newtheorem{thm}{Theorem}[section]
\newtheorem{cor}{Corollary}[section]
\newtheorem{lem}{Lemma}[section]
\newtheorem{prop}{Proposition}[section]
\theoremstyle{definition}
\newtheorem{defi}{Definition}[section]
\newtheorem{exa}{Example}[section]
\newtheorem{rem}{Remark}[section]

\numberwithin{equation}{section}

\begin{center}
 Box-counting dimension of solution curves for a class of
 two-dimensional nonautonomous linear differential systems
\end{center}

\vspace{2ex}

\begin{center}
 Masakazu Onitsuka \\[1ex]
 Department of Applied Mathematics, Faculty of Science \\
 Okayama University of Science \\
 Ridaichou 1--1, Okayama 700--0005, Japan \\
 email: \texttt{onitsuka@xmath.ous.ac.jp} \\[1ex]
 and \\[1ex]
 Satoshi Tanaka 
 \footnote{This work was supported by JSPS KAKENHI Grant Number 26400182.
 \\ \hfill \today}
 \\[1ex]
 Department of Applied Mathematics, Faculty of Science \\
 Okayama University of Science \\
 Ridaichou 1--1, Okayama 700--0005, Japan \\
 email: \texttt{tanaka@xmath.ous.ac.jp}
\end{center}

\vspace{3ex}

{\bf Abstract.}
The two-dimensional linear differential system
\begin{equation*}
 x' = y, \quad y' = -x-h(t)y
\end{equation*}
is considered on $[t_0,\infty)$, where $h \in C^1[t_0,\infty)$ and
$h(t)>0$ for $t \ge t_0$.
The box-counting dimension of the graphs of solution curves is calculated.
Criteria to obtain the box-counting dimension of spirals are also
established.

\vspace{2ex}

\noindent{2010} {\itshape Mathematical Subject Classification.}
34A30, 37C45, 28A80 \\
\noindent{\itshape Keywords.}
linear system, box-counting dimension, spiral

\vspace{1ex}


\section{Introduction}

In this paper, we consider the following two-dimensional linear
differential system
\begin{equation}
   \begin{array}{l}
    x' = y, \\[1ex]
    y' = -x-h(t)y
   \end{array}
 \label{S}
\end{equation}
for $t \ge t_0$, where $h \in C^1[t_0,\infty)$ and $h(t)>0$ for $t \ge t_0$.
This system has the {\it zero solution} $(x(t),y(t))\equiv(0,0)$.
Setting $y=x'$, we can rewrite \eqref{S} as the damped linear oscillator
\begin{equation}
 x'' + h(t) x' + x = 0, \quad t \ge t_0.
  \label{E}
\end{equation}
By a general theory (for example \cite{Cop, Har}),
there exists a unique solution of \eqref{S} on $[t_0,\infty)$ with the
initial condition $x(t_1)=\alpha$ and $y(t_1)=\beta$ for every $\alpha$, 
$\beta \in {\bf R}$ and $t_1 \ge t_0$.
Hence, we note that every nontrivial solution $(x(t),y(t))$
satisfies $(x(t),y(t))\ne(0,0)$ for $t\ge t_0$.

The zero solution $(x(t),y(t))\equiv(0,0)$ of \eqref{S} is said to be 
{\it attractive} if every solution $(x(t),y(t))$ of \eqref{S} satisfies 
$\lim_{t\to\infty} x(t) = \lim_{t\to\infty} y(t) =0$. 
There are a lot of studies of the attractivity to \eqref{S}
(see, for example, \cite{DIST,Onitsuka2010,Onitsuka2011,Smith,SO10}).

Now, we assume that the zero solution of \eqref{S} is attractive.
Let $(x(t),y(t))$ be a solution of \eqref{S}.
We define the solution curve of $(x(t),y(t))$ on $[t_1,\infty)$ in
${\bf R}^2$ by
\[
 \Gamma_{(x,y;t_1)} = \{ (x(t),y(t)) : t \ge t_1 \}
\]
for each fixed $t_1 \ge t_0$.
A curve $\Gamma_{(x,y;t_1)}$ is said to be {\it simple} if 
$(x(t),y(t)) \ne (x(s),y(s))$ for $t$, $s \in [t_1,\infty)$
with $t \ne s$.
A simple solution curve $\Gamma_{(x,y;t_1)}$ is said to be 
{\it rectifiable} if the length of $\Gamma_{(x,y;t_1)}$ is finite, that is
\[
 \int_{t_1}^\infty \sqrt{|x'(t)|^2+|y'(t)|^2} dt < \infty.
\]
Otherwise, it is said to be {\it non-rectifiable}, that is
\[
 \int_{t_1}^\infty \sqrt{|x'(t)|^2+|y'(t)|^2} dt = \infty.
\]

The rectifiability of solutions to two-dimensional linear differential
systems was studied by Mili\v{c}i\'{c} and Pa\v{s}i\'{c} \cite{MP}
and Naito and Pa\v{s}i\'{c} \cite{NP}.
Naito, Pa\v{s}i\'{c} and Tanaka \cite{NPT} obtained rectifiable and
non-rectifiable results of solutions to half-linear differential systems.
Recently, the following Theorem A is established in \cite{OT2017}.
In what follows, the following notation will be used:
\[
 H(t) = \int_{t_0}^t h(s) ds.
\]

\bigskip

\noindent{\bf Theorem A.} \it
 Let $h \in C^1[t_0,\infty)$ satisfy $h(t)>0$ for $t \ge t_0$.
 Assume that the following conditions \eqref{H(oo)=oo} and \eqref{HW}
 are satisfied\textup{:}
 \begin{gather}
  \int_{t_0}^\infty h(t) dt = \infty; \label{H(oo)=oo} \\
  \int_{t_0}^\infty | 2 h'(t) + |h(t)|^2 | dt < \infty.
  \label{HW}
 \end{gather}
 Then, the zero solution of \eqref{S} is attractive and every nontrivial 
 solution $(x(t),y(t))$ of \eqref{S} is a spiral, rotating in a
 clockwise direction for all sufficiently large $t \ge t_0$, and its
 solution curve $\Gamma_{(x,y;t_0)}$ is simple.
 Moreover, the following properties \textup{(i)} and \textup{(ii)}
 hold\textup{:}
 \begin{enumerate}
  \item every nontrivial solution of \eqref{S} is rectifiable if 
	\begin{equation*}
	 \int_{t_0}^\infty e^{-H(t)/2} dt < \infty;
	\end{equation*}
  \item every nontrivial solution of \eqref{S} is non-rectifiable if
	\begin{equation*}
	 \int_{t_0}^\infty e^{-H(t)/2} dt = \infty.
	\end{equation*}
 \end{enumerate}
\rm

\bigskip

In the above theorem, we adopt the definition of a spiral,
according to a celebrated book by Hartman
\cite[Chapters VII and VIII]{Har} as follows.
For every nontrivial solution $(x(t),y(t))$ of \eqref{S}, we introduce
polar coordinates
\[
 x(t) = r(t) \cos \theta(t), \quad
 y(t) = r(t) \sin \theta(t),
\]
where the amplitude $r(t)>0$.
A nontrivial solution $(x(t),y(t))$ of \eqref{S} is said to be a 
{\it spiral} if $|\theta(t)| \to \infty$ as $t \to \infty$.  

In this paper, we obtain the box-counting dimension of the solution curve
$\Gamma_{(x,y;t_1)}$ for a nontrivial solution $(x(t),y(t))$ of \eqref{S}.
For a bounded subset $\Gamma$ of ${\bf R}^2$, we define the
{\it box-counting dimension} ({\it Minkowski-Bouligand dimension}) of
$\Gamma$ by
\begin{equation*}
 \dim_{\rm B} \Gamma 
  = 2 - \lim_{\varepsilon \to +0} \frac{\log|\Gamma_\varepsilon|}
 {\log \varepsilon}, 
\end{equation*}
where $\Gamma_\varepsilon$ denotes the $\varepsilon$-neighborhood of 
$\Gamma$ defined by
\begin{equation}
 \Gamma_\varepsilon 
  = \{ (x,y) \in {\bf R}^2 : d((x,y),\Gamma) \le \varepsilon \},
  \label{Ge}
\end{equation}
$d((x,y),\Gamma)$ denotes the Euclidean distance from $(x,y)$
to $\Gamma$, and $|\Gamma_{\varepsilon}|$ denotes the two-dimensional
Lebesgue measure of $\Gamma_{\varepsilon}$. 
More details on the definition of the box-counting dimension can be
found in Falconer \cite{Fal} and Tricot \cite{Tri}.
If there exist $d \in [0,2]$, $c_1>0$ and $c_2>0$ such that
\[
 c_1 \varepsilon^{2-d} \le |\Gamma_\varepsilon| \le c_2 \varepsilon^{2-d}
\]
for each sufficiently small $\varepsilon>0$, then 
$\dim_{\rm B} \Gamma=d$.

The following result has been established in
Tricot \cite[\S9.1, Theorem]{Tri}.

\begin{prop}\label{length}
 Let $\Gamma$ be a simple curve of finite length.
 Then,
 \[
  \lim_{\varepsilon\to+0} \frac{|\Gamma_\varepsilon|}{2\varepsilon}
  = \mbox{\rm length}(\Gamma),
 \]
 where $\mbox{\rm length}(\Gamma)$ denotes the length of $\Gamma$.
\end{prop}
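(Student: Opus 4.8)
The plan is to prove the two one-sided estimates
\[
\limsup_{\varepsilon\to+0}\frac{|\Gamma_\varepsilon|}{2\varepsilon}\le\mathrm{length}(\Gamma)
\quad\text{and}\quad
\liminf_{\varepsilon\to+0}\frac{|\Gamma_\varepsilon|}{2\varepsilon}\ge\mathrm{length}(\Gamma)
\]
separately and then combine them. Write $L=\mathrm{length}(\Gamma)$ and fix the arc-length parametrization $\gamma\colon[0,L]\to\mathbf{R}^2$; this exists and is $1$-Lipschitz because $\Gamma$ is rectifiable, and it is injective because $\Gamma$ is simple, whence $\mathcal{H}^1(\Gamma)=L$, where $\mathcal{H}^1$ denotes one-dimensional Hausdorff measure.

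For the upper bound I would exploit that $\Gamma_\varepsilon=\bigcup_{0\le s\le L}B(\gamma(s),\varepsilon)$ is precisely the region swept out by a closed disk of radius $\varepsilon$ whose centre runs along $\gamma$. The goal is the \emph{sweeping estimate}
\[
|\Gamma_\varepsilon|\le 2\varepsilon L+\pi\varepsilon^2 .
\]
To prove it, set $A(s)=\bigl|\bigcup_{0\le\sigma\le s}B(\gamma(\sigma),\varepsilon)\bigr|$, so that $A(0)=\pi\varepsilon^2$ and $A(L)=|\Gamma_\varepsilon|$. Since the region newly covered in passing from $s$ to $s'$ lies in $B(\gamma(s),\varepsilon+(s'-s))\setminus B(\gamma(s),\varepsilon)$, the function $A$ is Lipschitz, hence absolutely continuous. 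At every point where $\gamma'(s)$ exists with $|\gamma'(s)|=1$, the increment $A(s+\eta)-A(s)$ is at most the area of $B(\gamma(s+\eta),\varepsilon)\setminus B(\gamma(s),\varepsilon)$ up to an $o(\eta)$ error coming from the deviation of the short arc from its chord, and a direct computation of this lune gives $2\varepsilon\eta+o(\eta)$. Thus $A'(s)\le 2\varepsilon$ for almost every $s$, and integrating yields the sweeping estimate, so that $|\Gamma_\varepsilon|/(2\varepsilon)\le L+\pi\varepsilon/2\to L$.

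For the lower bound I would rely on the general inequality $\mathcal{H}^1(\Gamma)\le\liminf_{\varepsilon\to+0}|\Gamma_\varepsilon|/(2\varepsilon)$, which holds for every bounded set (the lower Minkowski content dominates the Hausdorff measure, with the normalising constant $\omega_1=2$ appearing in the denominator $2\varepsilon$). Since $\Gamma$ is a simple rectifiable curve, $\mathcal{H}^1(\Gamma)=L$, and the lower bound follows. If a self-contained argument is preferred, one can instead split $\Gamma$ into finitely many subarcs whose $\varepsilon$-neighbourhoods are pairwise disjoint for all small $\varepsilon$ once the partition is fixed---this is where simplicity of $\Gamma$ enters---and bound the area of each neighbourhood below by essentially $2\varepsilon$ times the subarc's length, letting the mesh tend to $0$ afterwards. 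Either way, the two bounds together give $\lim_{\varepsilon\to+0}|\Gamma_\varepsilon|/(2\varepsilon)=L$.

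The main obstacle is the sweeping estimate. The geometric picture---an advancing disk enlarges the covered set at rate at most $2\varepsilon$---is immediate for polygonal or $C^1$ curves, but making the pointwise bound $A'(s)\le 2\varepsilon$ rigorous for a merely rectifiable $\gamma$ requires controlling the $o(\eta)$ deviation term using only almost-everywhere differentiability of the arc-length parametrization. I would specifically avoid transferring the polygonal computation to $\Gamma$ by inscribing a fine polygon and letting $\varepsilon\to+0$ afterwards, since the number $n$ of vertices then enters the error as a term of order $n\varepsilon^2$ and the two limits cannot be interchanged; the differential sweeping argument sidesteps this entirely.
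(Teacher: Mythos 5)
First, a remark on the comparison itself: the paper does not prove Proposition \ref{length} at all --- it is quoted from Tricot \cite[\S 9.1]{Tri} --- so your attempt has to stand entirely on its own. Your upper bound does stand: the sweeping estimate $|\Gamma_\varepsilon|\le 2\varepsilon L+\pi\varepsilon^2$ is correct, and your differential argument works as described. The increment of $A$ over $[s,s+\eta]$ is contained in the $(\varepsilon+o(\eta))$-neighborhood of the chord (a stadium) minus $B(\gamma(s),\varepsilon)$, whose area is $2\varepsilon\eta+o(\eta)$ at any point where $\gamma'(s)$ exists with $|\gamma'(s)|=1$; such points have full measure for the arc-length parametrization, $A$ is Lipschitz, and integration gives the bound. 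This half is fine (and sharper than the paper's own Lemma \ref{|Ge|<}, which carries factors $4\pi$).

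The genuine gap is the lower bound. The inequality $\mathcal{H}^1(A)\le\liminf_{\varepsilon\to+0}|A_\varepsilon|/(2\varepsilon)$ is \emph{not} a standard fact ``for every bounded set'': what a Vitali-type argument gives for arbitrary bounded sets (disjoint balls $B(x_i,\varepsilon)$ inside $A_\varepsilon$ for a maximal $2\varepsilon$-separated family, concentric balls $B(x_i,2\varepsilon)$ covering $A$) is only $\mathcal{H}^1(A)\le(8/\pi)\liminf_{\varepsilon\to+0}|A_\varepsilon|/(2\varepsilon)$, and here the constant is everything. The constant-one statement is a theorem about \emph{rectifiable} sets (Federer, Geometric Measure Theory, 3.2.39: for closed $1$-rectifiable sets the Minkowski content exists and equals $\mathcal{H}^1$); but in that form it asserts both inequalities of Proposition \ref{length} at once, so citing it reduces the proposition to itself. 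Your self-contained alternative also fails as stated: consecutive subarcs of a partition share an endpoint, so their $\varepsilon$-neighborhoods are \emph{never} pairwise disjoint; if you keep only alternating subarcs (which are pairwise disjoint compacta, hence at positive distance, by simplicity), you capture only every other chord and obtain $\liminf_{\varepsilon\to+0}|\Gamma_\varepsilon|/(2\varepsilon)\ge(L-\delta)/2$, losing a factor $2$. Moreover, bounding each subarc's neighborhood below by $2\varepsilon$ times its \emph{length} is circular --- that is the proposition applied to the subarc; the usable bound is $2\varepsilon$ times the \emph{chord} (by projection onto the chord direction, i.e.\ the paper's Lemma \ref{|Ge|>} from Tricot), with $\sum_i|\gamma(s_i)-\gamma(s_{i-1})|\ge L-\delta$ for a fine partition. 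The missing step, which is where simplicity really enters, is to show that for a \emph{fixed} partition the overlap $|(\Gamma^i)_\varepsilon\cap(\Gamma^{i+1})_\varepsilon|$ is $o(\varepsilon)$ as $\varepsilon\to+0$: the overlap lies in the $\varepsilon$-neighborhood of the set of points of $\Gamma^i$ within $2\varepsilon$ of $\Gamma^{i+1}$, which by simplicity shrinks to the common endpoint, so its length tends to $0$ and your own sweeping estimate applied to it gives area at most $2\varepsilon\cdot o(1)+\pi\varepsilon^2=o(\varepsilon)$. (A crude inclusion in a small ball does not suffice: the radius of that ball need not be $o(\sqrt{\varepsilon})$.) Without this overlap estimate, or an appeal to Federer's theorem with rectifiability acknowledged, the lower half of the proposition remains unproven.
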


Therefore, if $\mbox{\rm length}(\Gamma)<\infty$, then
$\dim_{\rm B} \Gamma=1$.

The box-counting dimensions of the graph of solutions of the nonautonomous
differential equation was first obtained by Pa\v{s}i\'{c} \cite{Pasic2003}.
Thereafter, it is obtained about the nonautonomous second order linear
differential equations in \cite{KPW08, Pasic08, PT11, PT13}.
On the other hands, the box-counting dimensions of solution curves to
autonomous two-dimensional nonlinear differential systems are
established in \cite{PZZ2009, RZZ2012, ZZ2005, ZZ2008}.
Recently, Korkut, Vlah and \v{Z}upanovi\'{c} \cite{KVZ} consider the
equation
\begin{equation}
 t^2x'' + t(2-\mu)x'+(t^2-\nu^2)x=0,
  \label{Bessel}
\end{equation}
where $\mu$, $\nu \in {\bf R}$, and define generalized Bessel functions 
$\widetilde{J}_{\nu,\mu}$ and $\widetilde{Y}_{\nu,\mu}$ by two linearly
independent solutions of \eqref{Bessel}.
When $\mu=1$, equation \eqref{Bessel} is known as Bessel's differential
equation and Bessel functions $J_\nu$ and $Y_\nu$ are its two linearly
independent solutions.
In \cite{KVZ}, the relation 
\[
 \widetilde{J}_{\nu,\mu}(t)=t^{\frac{\mu-1}{2}} J_{\widetilde{\nu}}(t), \quad
 \widetilde{Y}_{\nu,\mu}(t)=t^{\frac{\mu-1}{2}} Y_{\widetilde{\nu}}(t), \quad
 \widetilde{\nu} = \sqrt{\left(\frac{\mu-1}{2}\right)^2+\nu^2} .
\]
is found, and the following result is established.

\bigskip

\noindent{\bf Theorem B ([6]).} \it
Let $\mu \in (0,2)$, $\nu \in {\bf R}$ and $t_0>0$.
Let $x(t)=\widetilde{J}_{\nu,\mu}(t)$ or $\widetilde{Y}_{\nu,\mu}(t)$.
Then the planar curve $\Gamma=\{(x(t),x'(t)) : t \ge t_0 \}$ satisfies
$\dim_{\rm B} \Gamma = 4/(4-\mu)$.
\rm

\bigskip

It is worth while to note that if $x(t)=\widetilde{J}_{\nu,\mu}(t)$ or
$\widetilde{Y}_{\nu,\mu}(t)$, then $(x(t),y(t)):=(x(t),x'(t))$ is a
solution of the linear differential system
\begin{equation} 
 \begin{array}{l}
  x' = y, \\[1ex]
  y' = -\displaystyle\left( 1 - \frac{\nu^2}{t^2} \right)x-\frac{2-\mu}{t}y.
 \end{array}
 \label{Bessel_system} 
\end{equation}

The following two results are the main results of this paper.

\begin{thm}\label{F}
 Let $h \in C^1[t_0,\infty)$ satisfy $h(t)>0$ for $t \ge t_0$.
 Assume that \eqref{HW} and the following conditions are satisfied\textup{:}
 \begin{gather}
  \limsup_{t\to\infty} t h(t) < \infty;
   \label{th(t)<} \\
  H(t) = 2\alpha\log t + O(1) \quad \textup{\it as} \ t \to \infty \quad
  \textup{\it for\ some}\ \alpha \in (0,1).
   \label{H(t)=2alog}
 \end{gather}
 Then, for every nontrivial solution $(x(t),y(t))$ of \eqref{S}, there
 exists $t_1 \ge t_0$ such that $\dim_{\rm B}\Gamma_{(x,y;t_1)}=2/(1+\alpha)$.
\end{thm}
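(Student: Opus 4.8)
The plan is to reduce the system to a small $L^1$-perturbation of the harmonic oscillator, read off sharp two-sided asymptotics for the amplitude $r(t)$ and the angle $\theta(t)$, and then compute $|\Gamma_\varepsilon|$ by the usual partition of a planar spiral into a filled core and a tubular outer part. First I note that \eqref{H(t)=2alog} forces $H(t)\to\infty$, so \eqref{H(oo)=oo} holds and Theorem A applies: each nontrivial solution is a simple clockwise spiral for large $t$. Since the box-counting dimension of $\Gamma_{(x,y;t_1)}$ is a tail property (it is unchanged by deleting or appending a bounded rectifiable arc, by finite stability of $\dim_{\rm B}$), I may fix $t_1$ as large as convenient so that simplicity and all estimates below hold on $[t_1,\infty)$; this is exactly what the ``there exists $t_1$'' in the statement records.

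Next I substitute $x(t)=e^{-H(t)/2}u(t)$ in \eqref{E}. A direct computation turns $x''+h x'+x=0$ into $u''+(1-q)u=0$ with $q(t)=\tfrac14\bigl(2h'(t)+h(t)^2\bigr)$, and \eqref{HW} gives precisely $\int_{t_0}^\infty|q(t)|\,dt<\infty$. By the classical asymptotic theory for $u''+(1-q)u=0$ with $q\in L^1$ (Levinson's theorem, or directly the Pr\"ufer transformation combined with Gronwall's inequality applied to $\tfrac{d}{dt}(u^2+u'^2)=2q\,uu'$), the energy $u^2+(u')^2$ converges to a positive constant $\rho_\infty^2$ and the geometric angle of $(u,u')$ equals $-t+O(1)$. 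Condition \eqref{th(t)<} yields $h(t)=O(1/t)\to0$, so in $y=x'=e^{-H/2}\bigl(u'-\tfrac{h}{2}u\bigr)$ the term $\tfrac{h}{2}u$ is asymptotically negligible; hence
\[
 r(t)^2=x^2+y^2=e^{-H(t)}\bigl(\rho_\infty^2+o(1)\bigr),\qquad \theta(t)=-t+O(1).
\]
Feeding \eqref{H(t)=2alog} into the first relation gives two-sided bounds $c_1t^{-\alpha}\le r(t)\le c_2 t^{-\alpha}$, while the second gives $|\theta(t)|\asymp t$. Re-parametrising by the angle ($t\asymp|\theta|$), the solution curve is therefore sandwiched between two genuine power spirals $r=c_1|\theta|^{-\alpha}$ and $r=c_2|\theta|^{-\alpha}$ with $\alpha\in(0,1)$.

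With these asymptotics in hand I would estimate $|\Gamma_\varepsilon|$ by the standard spiral dichotomy, which is exactly the content of the companion box-dimension criterion for spirals. Two consecutive coils at radius $\asymp t^{-\alpha}$ are separated by a radial gap $\asymp t^{-\alpha-1}$, which equals $2\varepsilon$ at a critical radius $r_\ast\asymp\varepsilon^{\alpha/(1+\alpha)}$ (equivalently at time $T_\ast\asymp\varepsilon^{-1/(1+\alpha)}$). For $r\le r_\ast$ the coils are closer than $2\varepsilon$, so $\Gamma_\varepsilon$ fills a disk of radius $\asymp r_\ast$ and contributes measure $\asymp\varepsilon^{2\alpha/(1+\alpha)}$; for $r\ge r_\ast$ the neighbourhood is an essentially disjoint tube of width $2\varepsilon$ about the curve, contributing $\asymp\varepsilon\cdot\mathrm{length}\asymp\varepsilon\int^{T_\ast}r(t)\,dt\asymp\varepsilon\cdot\varepsilon^{-(1-\alpha)/(1+\alpha)}=\varepsilon^{2\alpha/(1+\alpha)}$. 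Both pieces give $|\Gamma_\varepsilon|\asymp\varepsilon^{2\alpha/(1+\alpha)}=\varepsilon^{2-d}$ with $d=2/(1+\alpha)$, whence $\dim_{\rm B}\Gamma_{(x,y;t_1)}=2/(1+\alpha)$.

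I expect two points to be delicate. The smaller one is making the amplitude and phase asymptotics genuinely two-sided; here the transformation to $u''+(1-q)u=0$ does most of the work, but one must track that $r'/r=-h\sin^2\theta$ renders $r$ only non-increasing (not strictly monotone in $\theta$). The real crux is that the $O(1)$ term in \eqref{H(t)=2alog} need not converge, so it may oscillate by an amount of order $1$ across a single coil; consequently the individual coil-to-coil gaps are \emph{not} controlled exactly, and one cannot simply compute the gap as $\tfrac12\bigl(H(t+2\pi)-H(t)\bigr)\,r(t)$. The way around this — and the reason the companion criterion is phrased through the robust two-sided comparison $r\asymp|\theta|^{-\alpha}$ rather than through precise per-coil spacings — is to sandwich $\Gamma$ between the two power spirals above and exploit that $|\Gamma_\varepsilon|$ changes only by bounded factors under such comparison. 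Getting this comparison argument right, together with the matching \emph{lower} bound (verifying that in the outer region the $\varepsilon$-tubes about distinct coils are genuinely disjoint, and that the core actually fills a disk of radius $\asymp r_\ast$), is where the main effort lies.
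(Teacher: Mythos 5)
Your reduction is correct and coincides with the paper's: the substitution $x=e^{-H/2}u$ turns \eqref{E} into $u''+(1-q)u=0$ with $q=\tfrac14(2h'+h^2)\in L^1$ by \eqref{HW}, and your Gronwall/Levinson argument recovers exactly the paper's Lemma \ref{V} (i.e.\ $e^{H}r^2\to$ a positive constant, using $h\to 0$), while $\theta(t)=-t+O(1)$ refines the paper's \eqref{theta'->-1}; likewise, your core/tube splitting of $\Gamma_\varepsilon$ at the critical angle $\asymp\varepsilon^{-1/(1+\alpha)}$ is precisely the mechanism of the paper's Theorem \ref{criterion}. The genuine gap is in your final paragraph, where you declare the per-coil gap estimate unobtainable and replace it by a comparison principle that is false.

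First, the gap between consecutive coils is never computed by differencing $H$; it is obtained by integrating $r'=-h r\sin^2\theta$ (Lemma \ref{r'theta'}) over one revolution. Since $-\theta'\in[\tfrac12,\tfrac32]$ for large $t$, one revolution occupies a $t$-interval of length at most $4\pi$, so with $th(t)\le C_3$ from \eqref{th(t)<}, $t^{\alpha}r(t)\le C_2$, and $t\ge\tfrac23\varphi$ along the curve,
\[
 0<f(\varphi)-f(\varphi+2\pi)
  =\int_{t_\varphi}^{t_{\varphi+2\pi}} h(s)r(s)\sin^{2}\theta(s)\,ds
  \le 4\pi C_2C_3\Bigl(\tfrac32\Bigr)^{\alpha+1}\varphi^{-\alpha-1},
\]
equivalently $-C\varphi^{-\alpha-1}\le f'(\varphi)\le 0$, which is exactly the hypothesis of the paper's Corollary \ref{cor_criterion}; this is the entire role of assumption \eqref{th(t)<}. (Note also $H(t+2\pi)-H(t)\le 2\pi C_3/t\to 0$, so the $O(1)$ term in \eqref{H(t)=2alog} cannot move by order one across a single coil: the obstruction you describe does not exist.) Second, your substitute principle --- that $|\Gamma_\varepsilon|$ changes only by bounded factors when $\Gamma$ is sandwiched between $r=c_1|\theta|^{-\alpha}$ and $r=c_2|\theta|^{-\alpha}$ --- is not valid: the sandwich alone does not determine $\dim_{\rm B}$ (a curve may oscillate radially inside the band fast enough to raise its dimension, the ``wavy spiral/chirp'' phenomenon studied in \cite{KVZZ}; and for $\alpha>\tfrac12$ the band itself has finite positive area, so comparison with it yields no nontrivial upper bound at all). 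Third, your matching lower bound asks for disjointness of the $\varepsilon$-tubes around distinct outer coils, which would require a per-coil gap \emph{lower} bound $\gtrsim\varphi^{-\alpha-1}$; the hypotheses do not provide one ($th(t)$ may dip arbitrarily close to $0$ along whole revolutions, making some gaps arbitrarily small), and none is needed: once every gap beyond the critical angle is $\le\varepsilon/2$, the entire disk of radius $\min f\gtrsim\varepsilon^{\alpha/(1+\alpha)}$ lies in $\Gamma_\varepsilon$ (the paper's Lemmas \ref{A<N} and \ref{<|N|<}), and this core alone supplies $|\Gamma_\varepsilon|\gtrsim\varepsilon^{2\alpha/(1+\alpha)}$. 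In short, the architecture is right, but at the very step you flag as the crux your plan discards the estimate that works and substitutes arguments that fail.
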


Here and hereafter, $f(t)=O(1)$ as $t \to \infty$ means that 
there exist $M>0$ and $t_1$ such that $|f(t)| \le M$ for $t \ge t_1$.

\begin{thm}\label{F1}
 Let $h \in C^1[t_0,\infty)$ satisfy $h(t)>0$ for $t \ge t_0$.
 Assume that \eqref{HW} and the following condition are satisfied\textup{:}
 \begin{equation}
  H(t) = 2\log t + O(1) \quad \textup{\it as} \ t \to \infty.
   \label{H(t)=2log}
 \end{equation}
 Then, for every nontrivial solution $(x(t),y(t))$ of \eqref{S}, there exists
 $t_1 \ge t_0$ such that $\dim_{\rm B} \Gamma_{(x,y;t_1)}=1$.
\end{thm}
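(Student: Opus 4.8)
The plan is to reduce the statement to a sharp two‑sided control of the amplitude $r(t)$ and then to estimate the Lebesgue measure of the $\varepsilon$‑neighbourhood directly, splitting the curve into an outer ``tube'' part and an inner ``nucleus'' part. First I would check that Theorem A applies: since \eqref{H(t)=2log} forces $H(t)\to\infty$, condition \eqref{H(oo)=oo} holds, so every nontrivial solution is a simple spiral; moreover $e^{-H(t)/2}\asymp t^{-1}$, whence $\int^\infty e^{-H/2}\,dt=\infty$ and the curve is \emph{non-rectifiable} by Theorem A(ii). Thus Proposition \ref{length} is unavailable and the length argument must be replaced by a genuine neighbourhood estimate. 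Writing $x=r\cos\theta$, $y=r\sin\theta$ and setting $\Gamma:=\Gamma_{(x,y;t_1)}$, I would record the monotonicity identity $\frac{d}{dt}(x^2+y^2)=2\bigl(xy+y(-x-hy)\bigr)=-2h(t)y^2\le 0$, so $r$ is non-increasing and the coils are nested; this is what lets me trap the tail of the curve inside a shrinking disk.

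Next I would establish the asymptotics. Put $w=e^{H/2}$, so that $w''=\tfrac14(2h'+h^2)w$, while \eqref{HW} says $2h'+h^2\in L^1$. From \eqref{H(t)=2log} one has $c_1 t\le w\le c_2 t$, and integrating $w''$ together with $\int_{t_0}^{t}|2h'+h^2|\,s\,ds\le t\,\|2h'+h^2\|_{L^1}$ gives $|w'(t)|=O(t)$, hence $h=2w'/w=O(1)$ is bounded; this is the point at which hypothesis \eqref{th(t)<} of Theorem \ref{F}, not assumed here, is circumvented. The substitution $x=e^{-H/2}u$ turns \eqref{E} into $u''+(1-q)u=0$ with $q=\tfrac14(2h'+h^2)\in L^1$; since $\frac{d}{dt}(u^2+u'^2)=2q\,uu'$, a Gronwall argument yields $0<c\le u^2+u'^2\le C$. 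As $y=x'=e^{-H/2}(u'-\tfrac h2 u)$, we get $r^2=e^{-H}\bigl[u^2+(u'-\tfrac h2 u)^2\bigr]$, and because $h$ is bounded the quadratic form in brackets is comparable to $u^2+u'^2$ uniformly in $t$; therefore $r(t)\asymp e^{-H(t)/2}\asymp t^{-1}$. Finally $\theta'=-1-\tfrac h2\sin 2\theta$ together with $\int_{t_1}^{t}h=H(t)-H(t_1)=O(\log t)=o(t)$ gives $|\theta(t)|\sim t$.

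For the neighbourhood estimate I would fix small $\varepsilon>0$ and split at $t_\varepsilon:=\varepsilon^{-1/2}$. For $t\ge t_\varepsilon$ monotonicity gives $r(t)\le r(t_\varepsilon)\le c_2\varepsilon^{1/2}$, so this tail together with its $\varepsilon$-neighbourhood lies in a disk of radius $c_2\varepsilon^{1/2}+\varepsilon$, of area $O(\varepsilon)$. For $t_1\le t\le t_\varepsilon$ the arc-length element $\sqrt{y^2+(x+hy)^2}\,dt\le C\,r(t)\,dt\le C'\,dt/t$ shows the head has length $\le C'\log(t_\varepsilon/t_1)\le C''|\log\varepsilon|$; covering it by $O(|\log\varepsilon|/\varepsilon)$ balls of radius $\varepsilon$ (the fixed-$\varepsilon$ analogue of Proposition \ref{length}) bounds its $\varepsilon$-neighbourhood by $O(\varepsilon|\log\varepsilon|)$. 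Hence $|\Gamma_\varepsilon|\le C\varepsilon|\log\varepsilon|$, which gives $\tfrac{\log|\Gamma_\varepsilon|}{\log\varepsilon}\ge 1-o(1)$, so $\overline{\dim}_{\rm B}\Gamma\le 1$. The reverse bound $|\Gamma_\varepsilon|\ge c\varepsilon$ follows from the $\varepsilon$-tube of any fixed sub-arc of positive length, giving $\underline{\dim}_{\rm B}\Gamma\ge 1$; together these force $\dim_{\rm B}\Gamma=1$.

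The main obstacle I expect is the amplitude bound $r(t)\asymp t^{-1}$: deriving boundedness of $h$ from \eqref{HW} and \eqref{H(t)=2log} alone, precisely because the convenient pointwise hypothesis \eqref{th(t)<} is not assumed, and then converting the $L^1$-perturbation asymptotics of $u^2+u'^2$ into uniform two-sided control of $r$. The remaining delicate point is bookkeeping rather than depth: one must verify that the borderline logarithmic factor $|\log\varepsilon|$ — the signature of the critical exponent $\alpha=1$ — enters only sub-polynomially, so that $\dim_{\rm B}\Gamma$ equals $1$ rather than exceeding it, in contrast with the regime $\alpha\in(0,1)$ of Theorem \ref{F}.
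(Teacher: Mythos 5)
Your proof is correct, and its geometric core is the same as the paper's: the splitting of $\Gamma_\varepsilon$ at parameter value $\varepsilon^{-1/2}$, the $O(\varepsilon)$ disk bound for the tail, the $(\mbox{length})\times\varepsilon$ covering bound $O(\varepsilon|\log\varepsilon|)$ for the head, and the lower bound $c\varepsilon$ via the tube of a fixed sub-arc are precisely the paper's Lemmas \ref{|N1|<}, \ref{|T1|<}, \ref{|Ge|<} and \ref{|Ge|>}, assembled as in the proof of Theorem \ref{criterion_one}. Where you differ is in packaging and in the analytic preliminaries. The paper first converts the solution curve into a polar graph $r=f(\varphi)$ via $\varphi=\eta(t)=-\theta(t)$ and then invokes the general criterion (Corollary \ref{criterion_one_cor}); you estimate the time-parametrized curve directly, trapping the tail with the monotonicity $(r^2)'=-2h(t)y^2\le 0$ instead of the polar bound $f(\varphi)\le\overline{m}\varphi^{-1}$ --- a nice simplification, under which the $\theta$-asymptotics you compute (and even the simplicity/spiral structure supplied by Theorem A) are never actually used in the measure estimates. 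Your analytic inputs are re-derivations of lemmas the paper quotes from \cite{OT2017}: the Gronwall argument for $u''+(1-q)u=0$ with $q=\frac{1}{4}(2h'+h^2)\in L^1$ is exactly the content of Lemma \ref{V}, and your $w=e^{H/2}$ argument for $h=O(1)$ is superseded by Lemma \ref{h->0}, which yields $h(t)\to 0$ from \eqref{HW} alone; thus the ``main obstacle'' you flag --- circumventing the absent hypothesis \eqref{th(t)<} --- has a one-line resolution in the paper. What each route buys: the paper's detour through Theorem \ref{criterion_one} produces a reusable criterion for arbitrary spirals given as polar graphs, while your inlined version is self-contained and marginally more general, since it never requires the curve to be a graph over the angle variable.
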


\begin{exa}
 We consider the case where $h(t)=\lambda t^{-\gamma}$, $\lambda>0$, 
 $1/2<\gamma\le1$ and $t_0 =1$.
 It is easy to check that \eqref{H(oo)=oo} and \eqref{HW} are satisfied, and
 \[
  H(t) = \left\{ 
   \begin{array}{ll}
    \displaystyle\frac{\lambda}{1-\gamma} (t^{1-\gamma} -1), 
    & \displaystyle\frac{1}{2}<\gamma<1, \\[2ex]
    \lambda \log t, & \gamma = 1.
   \end{array}
  \right.
 \] 
 Theorem A implies that the zero solution of \eqref{S} is
 attractive and every nontrivial solution $(x(t),y(t))$ of \eqref{S} is
 a spiral, rotating in a clockwise direction on $[t_1,\infty)$ for some
 $t_1 \ge t_0$, and its solution curve $\Gamma_{(x,y;t_0)}$ is simple 
 and that every nontrivial solution of \eqref{S} is rectifiable when
 either $1/2<\gamma<1$ or $\gamma=1$ and $\lambda>2$, and every
 nontrivial solution of \eqref{S} is non-rectifiable when $\gamma=1$ and 
 $0<\lambda \le 2$.
 Let $(x(t),y(t))$ be a nontrivial solution of \eqref{S}.
 Therefore, by Proposition \ref{length}, if either $1/2<\gamma<1$ or
 $\gamma=1$ and $\lambda>2$, then $\dim_{\rm B}\Gamma_{(x,y;t_1)}=1$.
 Moreover, Theorem \ref{F1} implies that
 $\dim_{\rm B}\Gamma_{(x,y;t_2)}=1$ for some $t_2 \ge t_1$ when
 $\gamma=1$ and $\lambda=2$.
 Applying Theorem \ref{F}, we conclude that if $\gamma=1$ and $0<\lambda<2$,
 then there exists $t_2 \ge t_1$ such that
 $\dim_{\rm B} \Gamma_{(x,y;t_2)}=4/(2+\lambda)$.

 Now, we set either 
 $(x(t),y(t))=(\widetilde{J}_{0,2-\lambda}(t),\widetilde{J}_{0,2-\lambda}'(t))$
 or
 $(x(t),y(t))=$\linebreak
 $(\widetilde{Y}_{0,2-\lambda}(t),\widetilde{Y}_{0,2-\lambda}'(t))$,
 where $0<\lambda<2$.
 Recalling that $(\widetilde{J}_{\nu,\mu}(t),\widetilde{J}_{\nu,\mu}'(t))$
 and $(\widetilde{Y}_{\nu,\mu}(t),\widetilde{Y}_{\nu,\mu}'(t))$ are
 solutions of system \eqref{Bessel_system}, we find that $(x(t),y(t))$
 is a solution of \eqref{S} with $h(t)=\lambda t^{-1}$.

 Here, we give numerical simulations of solution curves.

 \medskip

\begin{center}
 Solution curves for the case where $h(t)=\lambda t^{-\gamma}$: \\[2ex]
 \begin{tabular}{cc}
  \includegraphics[width=6cm]{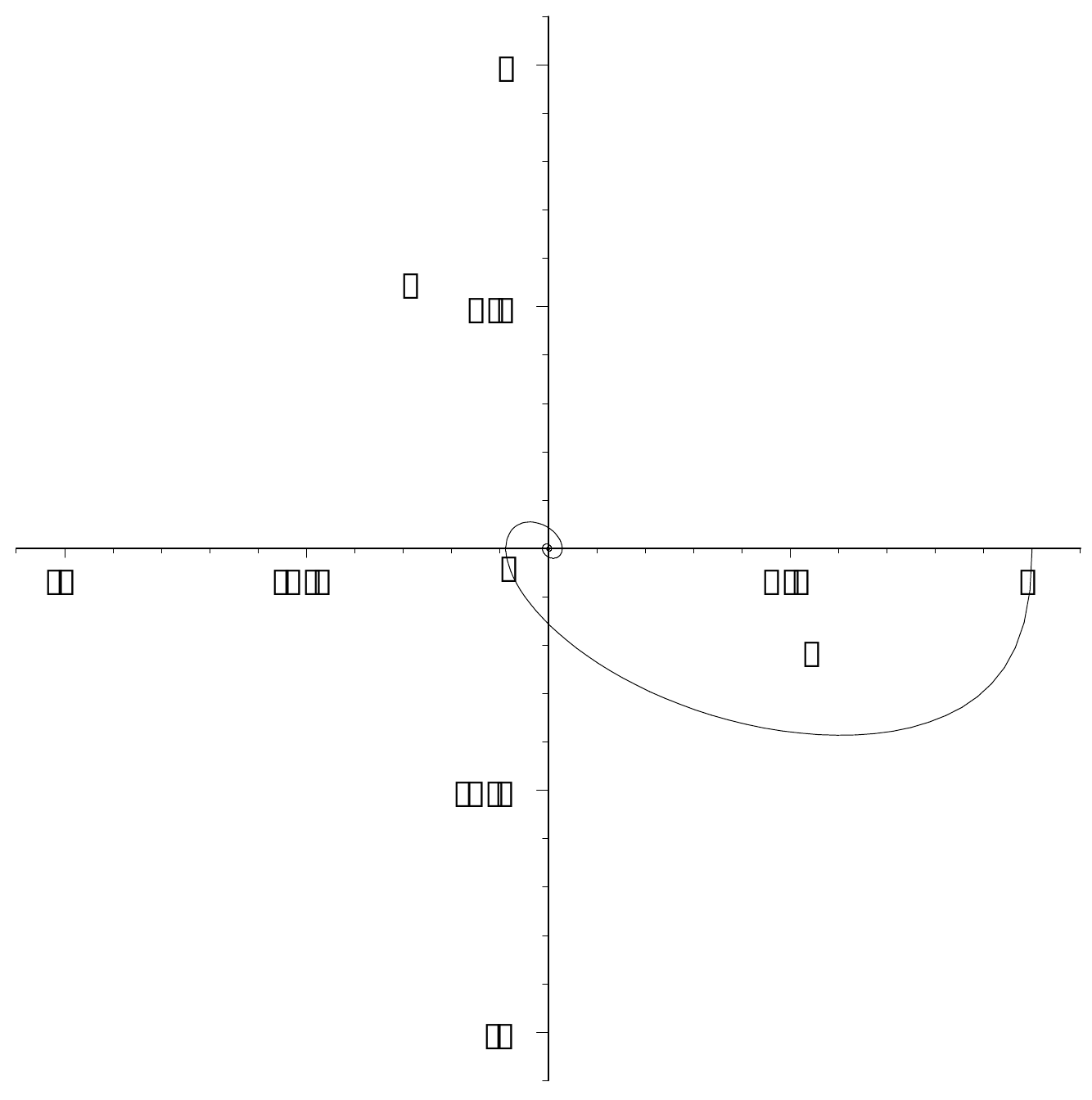} &
  \includegraphics[width=6cm]{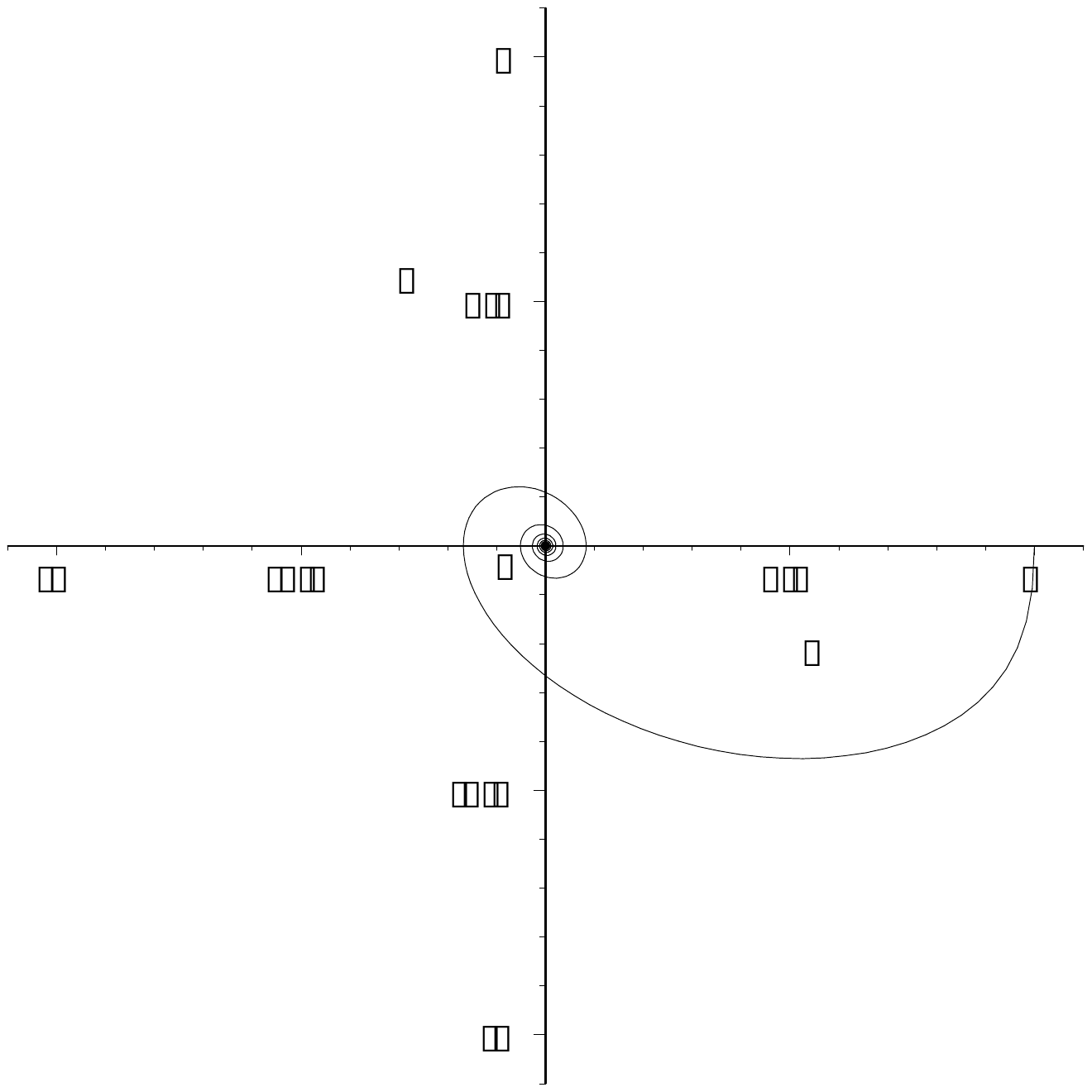} \\[0.5ex]
  \scriptsize{$h(t)=3t^{-3/4}$} & \scriptsize{$h(t)=3t^{-1}$} \\
  \scriptsize{$\dim_{\rm B}\Gamma_{(x,y;t_1)}=1$, rectifiable} &
  \scriptsize{$\dim_{\rm B}\Gamma_{(x,y;t_1)}=1$, rectifiable} 
 \end{tabular}
\end{center}
\begin{center}
 \begin{tabular}{cc}
  \includegraphics[width=6cm]{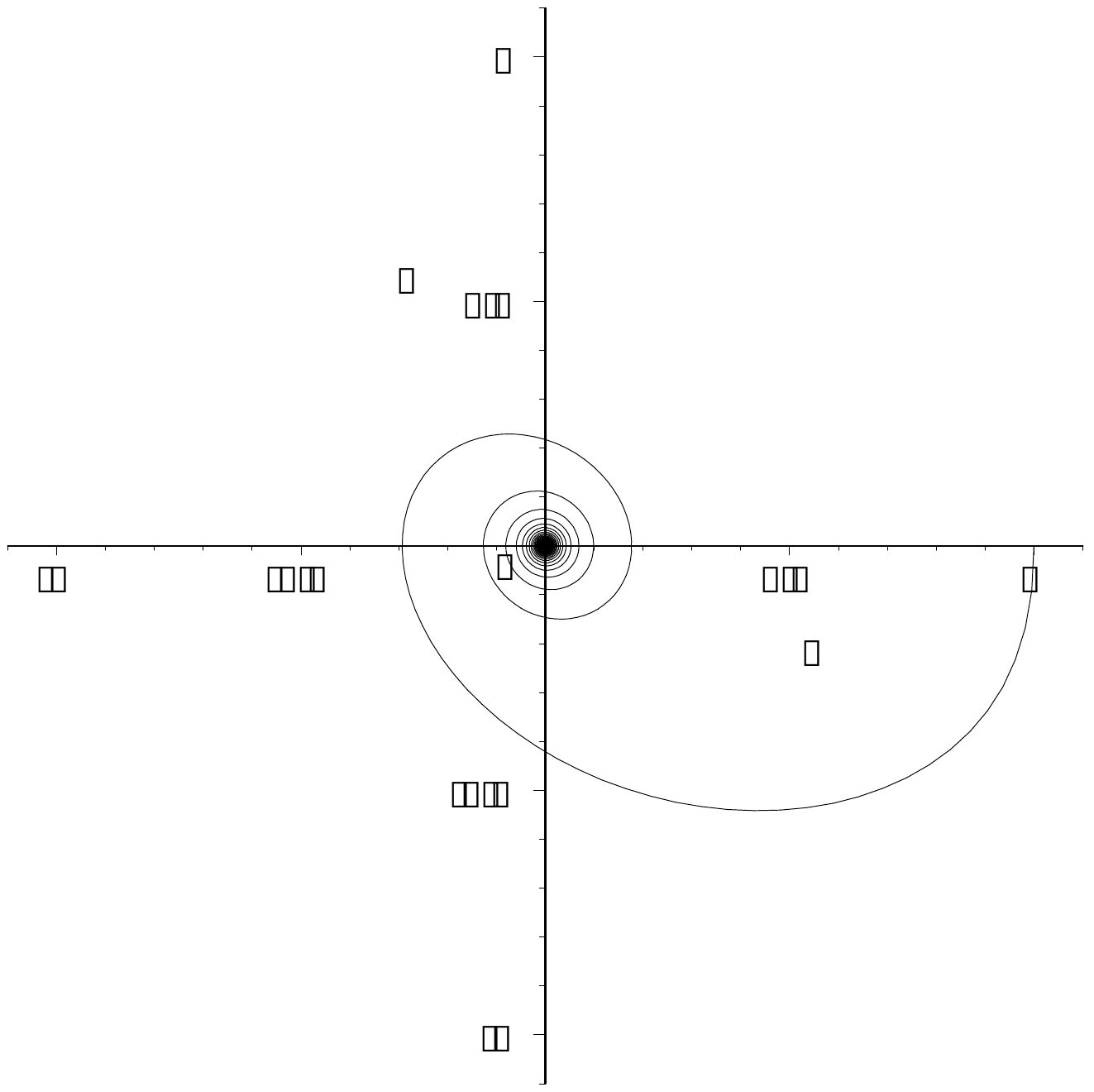} &
  \includegraphics[width=6cm]{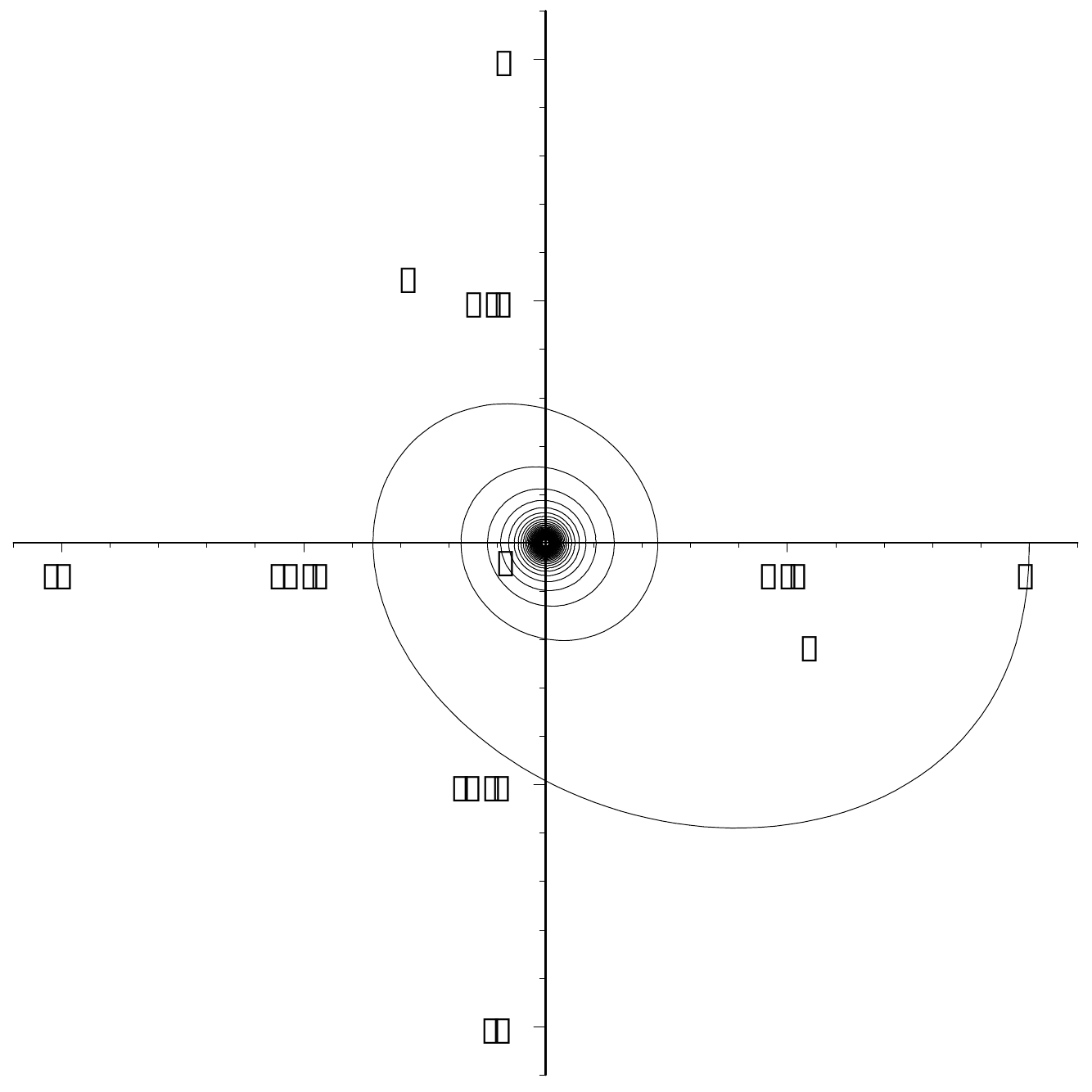} \\[0.5ex]
  \scriptsize{$h(t)=2t^{-1}$} & 
  \scriptsize{$h(t)=(5/3)t^{-1}$} \\
  \scriptsize{$\dim_{\rm B}\Gamma_{(x,y;t_2)}=1$, non-rectifiable} & 
  \scriptsize{$\dim_{\rm B} \Gamma_{(x,y;t_2)}=12/11$, non-rectifiable}
 \end{tabular}
\end{center}

\bigskip

\begin{center}
 \begin{tabular}{cc}
  \includegraphics[width=6cm]{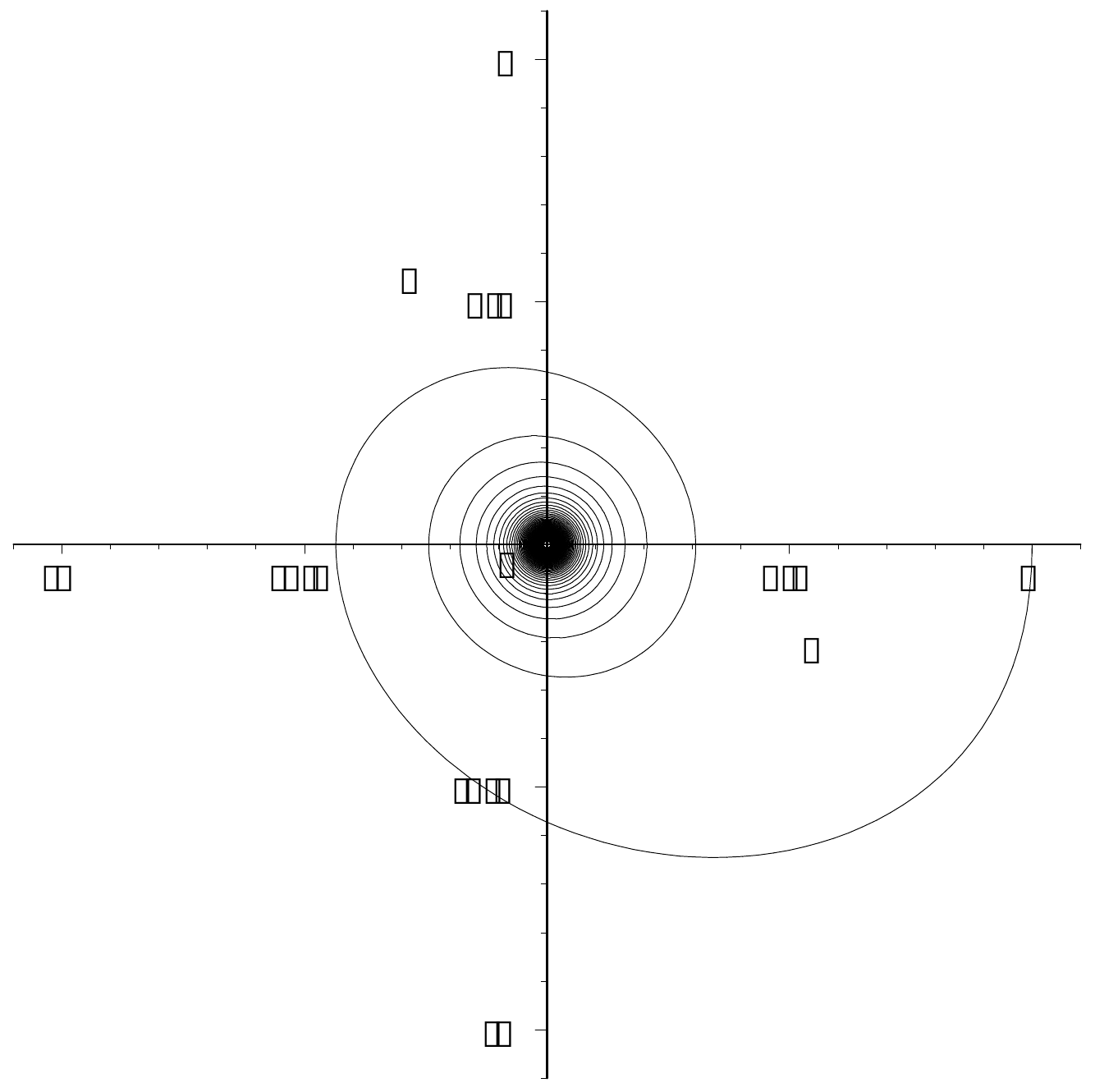} &
  \includegraphics[width=6cm]{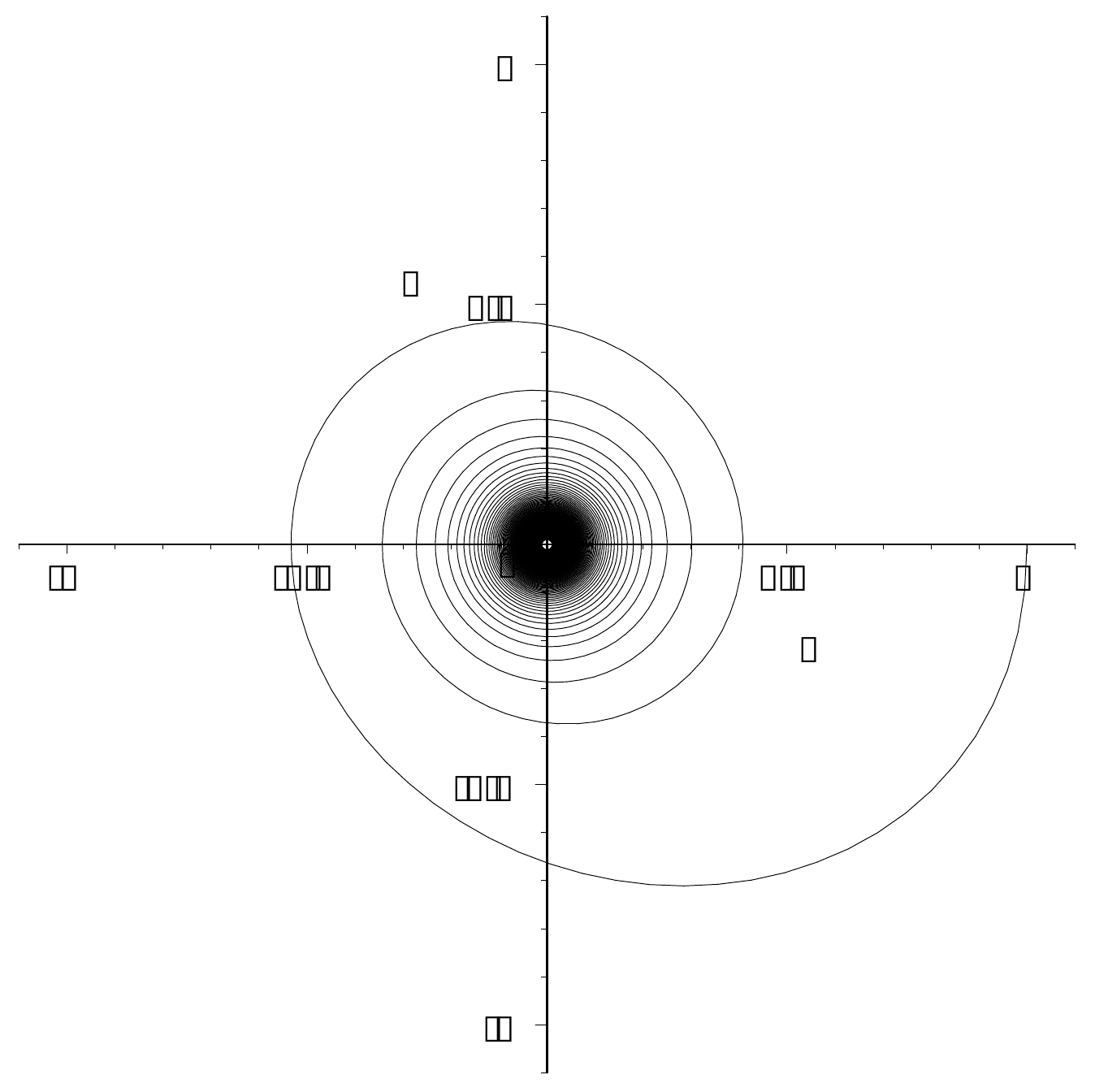} \\[0.5ex]
  \scriptsize{$h(t)=(4/3)t^{-1}$} & \scriptsize{$h(t)=t^{-1}$} \\
  \scriptsize{$\dim_{\rm B}\Gamma_{(x,y;t_2)}=6/5$, non-rectifiable} & 
  \scriptsize{$\dim_{\rm B} \Gamma_{(x,y;t_2)}=4/3$, non-rectifiable}
 \end{tabular}
\end{center}

\end{exa}

\bigskip

The box-counting dimension of the graph of the spiral
$r=\varphi^{-\alpha}$, $\varphi \ge \varphi_1>0$ in polar coordinates
is $2/(1+\alpha)$ when $0<\alpha<1$
(see, for example, Tricot \cite[\S10.4]{Tri}).
\v{Z}ubrini\'{c} and \v{Z}upanovi\'{c} \cite[Theorem 5]{ZZ2005}
generalized this fact to the function $r=f(\varphi)$, $\varphi \ge \varphi_1$.
Korkut, Vlah, \v{Z}ubrini\'{c} and \v{Z}upanovi\'{c}
\cite[Therem 2]{KVZZ} improved this result.
See also Korkut, Vlah and \v{Z}upanovi\'{c} \cite[Theorem 2]{KVZ}.
In this paper, we give the following alternative criterion of the
dimension of spirals.

\pagebreak

\begin{thm}\label{criterion}
 Let $\varphi_1>0$ and let $f \in C[\varphi_1,\infty)$ satisfy
 $\lim_{\varphi\to\infty} f(\varphi)=0$.
 Assume that there exist positive constants
 $\underline{m}$, $\overline{a}$, $M$ and $\alpha \in(0,1)$ such that,
 for all $\varphi \ge \varphi_1$,
 \begin{gather*}
  \underline{m} \varphi^{-\alpha} \le f(\varphi), \\
  0 < f(\varphi) - f(\varphi+2\pi) \le \overline{a} \varphi^{-\alpha-1}, \\
  \mbox{\rm length}(\Gamma(\varphi_1,\varphi))
  \le M \varphi^{1-\alpha}.
 \end{gather*}
 Let $\Gamma$ be the graph of $r=f(\varphi)$ in polar coordinates, that is
 \[
  \Gamma = \{ (f(\varphi)\cos \varphi,f(\varphi)\sin \varphi) :
  \varphi \ge \varphi_1 \}.
 \] 
 Then, $\dim_{\rm B} \Gamma = 2/(1+\alpha)$.
\end{thm}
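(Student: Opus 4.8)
The plan is to estimate the two–dimensional Lebesgue measure $|\Gamma_\varepsilon|$ of the $\varepsilon$–neighborhood from above and below by constant multiples of $\varepsilon^{2\alpha/(1+\alpha)}$; since $2-2\alpha/(1+\alpha)=2/(1+\alpha)$, the elementary criterion stated in the introduction then gives $\dim_{\rm B}\Gamma=2/(1+\alpha)$. The geometric idea is the usual decomposition of a spiral into a \emph{tail}, the tightly wound part near the origin where consecutive windings lie closer than $2\varepsilon$ and the neighborhood fills a whole disk, and an \emph{outer part}, the loosely wound part where the neighborhood is essentially a tube of width $2\varepsilon$ about a rectifiable arc. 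Accordingly I would first fix the critical angle $\varphi_\varepsilon=(\overline a/(2\varepsilon))^{1/(1+\alpha)}$, the value at which the assumed upper bound $\overline a\varphi^{-\alpha-1}$ for the radial gap between consecutive windings equals $2\varepsilon$; note $\varphi_\varepsilon\to\infty$ as $\varepsilon\to+0$, so $\varphi_\varepsilon\ge\varphi_1$ for all small $\varepsilon$. I then split $\Gamma=\Gamma^{\mathrm{out}}\cup\Gamma^{\mathrm{tail}}$ with $\Gamma^{\mathrm{out}}=\{(f(\varphi)\cos\varphi,f(\varphi)\sin\varphi):\varphi_1\le\varphi\le\varphi_\varepsilon\}$ and $\Gamma^{\mathrm{tail}}=\{(f(\varphi)\cos\varphi,f(\varphi)\sin\varphi):\varphi\ge\varphi_\varepsilon\}$, using $\Gamma_\varepsilon\subseteq(\Gamma^{\mathrm{out}})_\varepsilon\cup(\Gamma^{\mathrm{tail}})_\varepsilon$.

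A preliminary estimate I would record is that the hypothesis $0<f(\varphi)-f(\varphi+2\pi)\le\overline a\varphi^{-\alpha-1}$ together with $f(\varphi)\to0$ gives, by telescoping, $f(\varphi)=\sum_{k\ge0}\bigl(f(\varphi+2\pi k)-f(\varphi+2\pi(k+1))\bigr)\le\sum_{k\ge0}\overline a(\varphi+2\pi k)^{-\alpha-1}$, which by comparison with $\int_{\varphi}^{\infty}s^{-\alpha-1}\,ds$ is at most $C\varphi^{-\alpha}$ for a constant $C$ depending only on $\overline a$, $\alpha$ and $\varphi_1$. Combined with the assumed lower bound $f(\varphi)\ge\underline m\varphi^{-\alpha}$, this shows that $f(\varphi_\varepsilon)$ and $f(\varphi_\varepsilon+2\pi)$ are both comparable to $\varphi_\varepsilon^{-\alpha}$, hence to $\varepsilon^{\alpha/(1+\alpha)}$.

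For the upper bound I estimate the two pieces separately. Since $f$ is decreasing, $\Gamma^{\mathrm{tail}}$ lies in the closed disk of radius $f(\varphi_\varepsilon)$, so $(\Gamma^{\mathrm{tail}})_\varepsilon$ lies in the disk of radius $f(\varphi_\varepsilon)+\varepsilon$ and $|(\Gamma^{\mathrm{tail}})_\varepsilon|\le\pi(f(\varphi_\varepsilon)+\varepsilon)^2$, which is at most a constant times $\varepsilon^{2\alpha/(1+\alpha)}$ because $f(\varphi_\varepsilon)\le C\varphi_\varepsilon^{-\alpha}$ and $\varepsilon\le f(\varphi_\varepsilon)$ for small $\varepsilon$. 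For the outer part, $(\Gamma^{\mathrm{out}})_\varepsilon$ is contained in a tube of width $2\varepsilon$ about a rectifiable arc, whence $|(\Gamma^{\mathrm{out}})_\varepsilon|\le2\varepsilon\,\mathrm{length}(\Gamma(\varphi_1,\varphi_\varepsilon))+\pi\varepsilon^2\le2M\varepsilon\,\varphi_\varepsilon^{1-\alpha}+\pi\varepsilon^2$; since $\varepsilon\,\varphi_\varepsilon^{1-\alpha}$ is a constant multiple of $\varepsilon^{2\alpha/(1+\alpha)}$, adding the two bounds gives $|\Gamma_\varepsilon|\le c_2\,\varepsilon^{2\alpha/(1+\alpha)}$.

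For the lower bound the key point is that for every $\varphi\ge\varphi_\varepsilon$ the radial gap satisfies $f(\varphi)-f(\varphi+2\pi)\le\overline a\varphi^{-\alpha-1}\le\overline a\varphi_\varepsilon^{-\alpha-1}=2\varepsilon$. Fixing a direction and listing the radii $f(\theta_0),f(\theta_0+2\pi),\dots$ of the spiral in that direction, where $\theta_0$ is the least angle $\ge\varphi_\varepsilon$ congruent to it, these decrease to $0$ in steps of size at most $2\varepsilon$, so every radius in $(0,f(\theta_0)]$ lies within $\varepsilon$ of some winding. Hence $(\Gamma^{\mathrm{tail}})_\varepsilon$ contains the whole disk of radius $f(\varphi_\varepsilon+2\pi)$, giving $|\Gamma_\varepsilon|\ge\pi f(\varphi_\varepsilon+2\pi)^2\ge c_1\,\varepsilon^{2\alpha/(1+\alpha)}$ by the lower estimate on $f$; together with the upper bound this proves $\dim_{\rm B}\Gamma=2/(1+\alpha)$. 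The step I expect to demand the most care is precisely this filling argument for the lower bound: one must check rigorously that no gap remains in the disk, including the behaviour near the origin, where $f(\varphi)\to0$ guarantees that windings of arbitrarily small radius bring the origin itself into $(\Gamma^{\mathrm{tail}})_\varepsilon$.
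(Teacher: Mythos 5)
Your proposal is correct and follows essentially the same route as the paper's proof: the same critical angle of order $\varepsilon^{-1/(1+\alpha)}$ (the paper takes $\varphi_2(\varepsilon)=(2\overline{a}/\varepsilon)^{1/(\alpha+1)}$, you take $(\overline{a}/(2\varepsilon))^{1/(1+\alpha)}$), the same tail/outer decomposition of $\Gamma_\varepsilon$, the same telescoping bound $f(\varphi)\le\overline{m}\varphi^{-\alpha}$ (the paper's Lemma \ref{f<}), the same disk-filling argument for the lower bound (Lemmas \ref{A<N} and \ref{<|N|<}), and the same tube estimate for the outer arc (Lemmas \ref{|Ge|<} and \ref{|T|<}, where the paper proves its own covering bound $|G_\varepsilon|\le 4\pi\varepsilon\,\mbox{\rm length}(G)+4\pi\varepsilon^2$ instead of citing the sharp $2\varepsilon L+\pi\varepsilon^2$). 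The one point needing repair is that you twice treat $f$ as monotone decreasing, which is not a hypothesis (only decrease across full turns is assumed): the tail need not lie in the disk of radius $f(\varphi_\varepsilon)$, and the filled disk in your lower bound need not have radius $f(\varphi_\varepsilon+2\pi)$, since $f$ may oscillate within a single turn. Both statements are fixed, with no change to the final exponents, by replacing these radii with $\sup_{\varphi\ge\varphi_\varepsilon}f(\varphi)\le\overline{m}\,\varphi_\varepsilon^{-\alpha}$ (from your own telescoped bound) and $\min_{\psi\in[\varphi_\varepsilon,\varphi_\varepsilon+2\pi]}f(\psi)\ge\underline{m}\,(\varphi_\varepsilon+2\pi)^{-\alpha}$, respectively; these are exactly the quantities $r^*(\varepsilon)$ and $r_*(\varepsilon)$ used in the paper's Lemma \ref{<|N|<}.
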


From Theorem \ref{criterion}, we have the following Corollary.

\begin{cor}\label{cor_criterion}
 Let $\varphi_1>0$ and let $f \in C^1[\varphi_1,\infty)$ satisfy
 $\lim_{\varphi\to\infty} f(\varphi)=0$.
 Assume that there exist positive constants
 $\underline{m}$, $K$ and $\alpha \in(0,1)$ such that,
 for all $\varphi \ge \varphi_1$,
 \begin{gather*}
  \underline{m} \varphi^{-\alpha} \le f(\varphi), \\
  - K \varphi^{-\alpha-1} \le f'(\varphi) \le 0.
 \end{gather*}
 Assume, moreover, that $f'(\varphi)\not\equiv 0$ on $[\varphi,\varphi+2\pi)$
 for each fixed $\varphi \ge \varphi_1$.
 Let $\Gamma= \{ (f(\varphi)\cos \varphi,f(\varphi)\sin \varphi) :
  \varphi \ge \varphi_1 \}$.
 Then, $\dim_{\rm B} \Gamma = 2/(1+\alpha)$.
\end{cor}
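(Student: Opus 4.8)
The plan is to deduce Corollary~\ref{cor_criterion} directly from Theorem~\ref{criterion} by verifying its three hypotheses for the given $f$. The first hypothesis, $\underline{m}\varphi^{-\alpha}\le f(\varphi)$, is assumed outright, so nothing is required there. The real work is to produce, from the $C^1$ derivative bounds, the upper estimate on the $2\pi$-increment $f(\varphi)-f(\varphi+2\pi)$ and the bound on the partial arc length $\mathrm{length}(\Gamma(\varphi_1,\varphi))$, each carrying the correct power of $\varphi$.

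For the increment, I would start from the fundamental theorem of calculus,
\[
 f(\varphi)-f(\varphi+2\pi)=\int_\varphi^{\varphi+2\pi}\bigl(-f'(s)\bigr)\,ds .
\]
Since $f'\le 0$, the integrand $-f'(s)=|f'(s)|$ is nonnegative, so the integral is nonnegative; and because $f'$ is continuous with $f'\not\equiv0$ on $[\varphi,\varphi+2\pi)$, it is strictly negative on a subinterval, forcing $f(\varphi)-f(\varphi+2\pi)>0$. For the upper bound, the hypothesis $|f'(s)|\le Ks^{-\alpha-1}$ and the monotonicity of $s\mapsto s^{-\alpha-1}$ give
\[
 \int_\varphi^{\varphi+2\pi}|f'(s)|\,ds
  \le K\int_\varphi^{\varphi+2\pi}s^{-\alpha-1}\,ds
  \le 2\pi K\,\varphi^{-\alpha-1},
\]
so the second hypothesis of Theorem~\ref{criterion} holds with $\overline{a}=2\pi K$.

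The step I expect to be the crux is the length estimate, because it needs a \emph{decaying} upper bound $f(\varphi)\le C\varphi^{-\alpha}$ that is not assumed directly; it must be extracted from the lower derivative bound together with $f(\varphi)\to0$. Integrating $f'(u)\ge -Ku^{-\alpha-1}$ over $[\varphi,\infty)$ and using $f(\infty)=0$ yields
\[
 -f(\varphi)=\int_\varphi^\infty f'(u)\,du
  \ge -K\int_\varphi^\infty u^{-\alpha-1}\,du
  =-\frac{K}{\alpha}\varphi^{-\alpha},
\]
whence $f(\varphi)\le(K/\alpha)\varphi^{-\alpha}$. Recalling that in polar coordinates the arc-length integrand equals $\sqrt{f(s)^2+f'(s)^2}$, I would combine this with $|f'(s)|\le Ks^{-\alpha-1}$ and $s\ge\varphi_1$ to bound the integrand by $Cs^{-\alpha}$ for a constant $C$ depending only on $K$, $\alpha$ and $\varphi_1$. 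Integrating from $\varphi_1$ to $\varphi$ and using $1-\alpha>0$ then gives $\mathrm{length}(\Gamma(\varphi_1,\varphi))\le M\varphi^{1-\alpha}$ with $M=C/(1-\alpha)$, which is the third hypothesis. With all three conditions confirmed, Theorem~\ref{criterion} yields $\dim_{\rm B}\Gamma=2/(1+\alpha)$.
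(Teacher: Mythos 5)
Your proof is correct, and it follows the paper's overall strategy---verifying the three hypotheses of Theorem~\ref{criterion}---but it diverges in one sub-step worth noting. For the upper bound $f(\varphi)\le C\varphi^{-\alpha}$ needed in the length estimate, the paper does not integrate the derivative bound out to infinity as you do; instead, having established $0<f(\varphi)-f(\varphi+2\pi)\le 2\pi K\varphi^{-\alpha-1}$ (via the mean value theorem rather than your fundamental-theorem computation, an immaterial difference), it invokes Lemma~\ref{f<}, a discrete telescoping argument that extracts the decay of $f$ from the increment estimate alone. Your alternative---writing $-f(\varphi)=\int_\varphi^\infty f'(u)\,du\ge -(K/\alpha)\varphi^{-\alpha}$ using $f(\varphi)\to 0$---is equally valid and arguably more elementary in this setting, since the $C^1$ hypothesis and the pointwise derivative bound are available; it makes the corollary self-contained modulo Theorem~\ref{criterion}, whereas the paper's route reuses Lemma~\ref{f<}, which it needs anyway because Theorem~\ref{criterion} itself assumes only continuity and the increment bound. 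After this point the two arguments coincide: both bound the arc-length integrand $\sqrt{f(\psi)^2+f'(\psi)^2}$ by a constant times $\psi^{-\alpha}$, integrate to get $\mathrm{length}(\Gamma(\varphi_1,\varphi))\le M\varphi^{1-\alpha}$ using $1-\alpha>0$, and conclude by Theorem~\ref{criterion}.
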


The proof of Corollary \ref{cor_criterion} will be given in Section 2.
Using Corollary \ref{cor_criterion}, we prove Theorem \ref{F} in Section 4.
Corollary \ref{cor_criterion} is similar to the criterion by Korkut,
Vlah, \v{Z}ubrini\'{c} and \v{Z}upanovi\'{c} \cite[Therem 2]{KVZZ}.
The proof of Theorem 2 in \cite{KVZZ} is based on the proof of Theorem 5
in \cite{ZZ2005}.
\v{Z}ubrini\'{c} and \v{Z}upanovi\'{c} employed the radial box dimension
to prove Theorem 5 in \cite{ZZ2005}.
On the other hand, the proof of Theorem \ref{criterion}, which will be given
in Section 2, is more direct.

The box-counting dimension of the graph of the spiral
$r=\varphi^{-1}$, $\varphi \ge \varphi_1>0$ in polar coordinates is $1$
(see Tricot \cite[\S10.4]{Tri}).
We generalize this fact as follows.

\begin{thm}\label{criterion_one}
 Let $\varphi_1>1$ and let $f \in C[\varphi_1,\infty)$ satisfy
 $\lim_{\varphi\to\infty} f(\varphi)=0$.
 Assume that there exist positive constants
 $\overline{m}$ and $M$ such that, for all $\varphi \ge \varphi_1$,
 \begin{gather*}
  0 < f(\varphi) \le \overline{m} \varphi^{-1}, \\
  0 < f(\varphi) - f(\varphi+2\pi), \\
  \mbox{\rm length}(\Gamma(\varphi_1,\varphi)) \le M \log \varphi.
 \end{gather*}
 Let $\Gamma= \{ (f(\varphi)\cos \varphi,f(\varphi)\sin \varphi) :
  \varphi \ge \varphi_1 \}$.
 Then, $\dim_{\rm B} \Gamma = 1$.
\end{thm}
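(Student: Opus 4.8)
The plan is to sandwich the area $|\Gamma_\varepsilon|$ between two quantities of the form $c\varepsilon$ and $C\varepsilon|\log\varepsilon|$; since both $\log(c\varepsilon)/\log\varepsilon$ and $\log(C\varepsilon|\log\varepsilon|)/\log\varepsilon$ tend to $1$ as $\varepsilon\to+0$ (the logarithmic factor being harmless once divided by $\log\varepsilon$), this forces $\lim_{\varepsilon\to+0}\log|\Gamma_\varepsilon|/\log\varepsilon=1$ and hence $\dim_{\rm B}\Gamma=2-1=1$. The geometric idea, as for Theorem \ref{criterion}, is to split the spiral at a threshold angle into an outer arc whose consecutive coils are well separated (so that its $\varepsilon$-neighborhood is an essentially tubular region of area $\approx 2\varepsilon\cdot\mathrm{length}$) and an inner tail that has already collapsed into a small disk about the origin. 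Here the correct threshold is $\varphi_\varepsilon=1/\varepsilon$, which makes both contributions of order $\varepsilon|\log\varepsilon|$ or smaller.

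For the upper bound, I first note that $f(\varphi)\le\overline{m}\varphi^{-1}$ shows $\Gamma$ is bounded and, since $\varphi\mapsto\varphi^{-1}$ is decreasing, that every point of the inner tail $\{(f(\varphi)\cos\varphi,f(\varphi)\sin\varphi):\varphi\ge\varphi_\varepsilon\}$ lies in the closed disk of radius $\overline{m}\varphi_\varepsilon^{-1}=\overline{m}\varepsilon$. Its $\varepsilon$-neighborhood is therefore contained in the disk of radius $(\overline{m}+1)\varepsilon$, contributing area at most $\pi(\overline{m}+1)^2\varepsilon^2$. For the outer arc $\Gamma(\varphi_1,\varphi_\varepsilon)$, the third hypothesis gives $\mathrm{length}(\Gamma(\varphi_1,\varphi_\varepsilon))\le M\log\varphi_\varepsilon=M\log(1/\varepsilon)$; parametrizing by arc length and placing points at spacing $\varepsilon$ covers this arc by at most $\lceil M\log(1/\varepsilon)/\varepsilon\rceil+1$ balls of radius $\varepsilon$, so its $\varepsilon$-neighborhood is covered by the concentric balls of radius $2\varepsilon$ and has area at most a constant times $\varepsilon\log(1/\varepsilon)$. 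Adding the two pieces yields $|\Gamma_\varepsilon|\le C\varepsilon|\log\varepsilon|$ for all small $\varepsilon$, whence $\liminf_{\varepsilon\to+0}\log|\Gamma_\varepsilon|/\log\varepsilon\ge 1$, i.e. $\overline{\dim}_{\rm B}\Gamma\le 1$.

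For the lower bound, I would isolate the single coil $\Gamma_0=\Gamma(\varphi_1,\varphi_1+2\pi)$. It has finite length by the third hypothesis and positive length since its two endpoints are distinct (they share the direction of $\varphi_1$ but have radii $f(\varphi_1)>f(\varphi_1+2\pi)$), and it is simple because on $[\varphi_1,\varphi_1+2\pi)$ the polar angles are pairwise distinct modulo $2\pi$ while the endpoint $\varphi_1+2\pi$ has the same direction as $\varphi_1$ but strictly smaller radius, using the hypothesis $f(\varphi)-f(\varphi+2\pi)>0$. Proposition \ref{length} then gives $|(\Gamma_0)_\varepsilon|/(2\varepsilon)\to\mathrm{length}(\Gamma_0)>0$, and since $(\Gamma_0)_\varepsilon\subseteq\Gamma_\varepsilon$ we obtain $|\Gamma_\varepsilon|\ge c\varepsilon$ for small $\varepsilon$, hence $\limsup_{\varepsilon\to+0}\log|\Gamma_\varepsilon|/\log\varepsilon\le 1$ and $\underline{\dim}_{\rm B}\Gamma\ge 1$. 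Combining the two bounds gives $\dim_{\rm B}\Gamma=1$. I expect the main obstacle to be the outer-arc tube estimate together with the balancing choice $\varphi_\varepsilon=1/\varepsilon$: one must check that the logarithmic growth of the length (rather than the polynomial growth used in Theorem \ref{criterion}) keeps the tubular contribution at order $\varepsilon|\log\varepsilon|$, and then recognize that this extra logarithm, while responsible for non-rectifiability, leaves the box-counting dimension unchanged.
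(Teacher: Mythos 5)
Your proposal is correct, and its upper bound for $|\Gamma_\varepsilon|$ follows essentially the paper's route: the paper splits the spiral at $\varphi=\varepsilon^{-1/2}$ into an outer arc, handled by the tube estimate of Lemma \ref{|Ge|<} (this is Lemma \ref{|T1|<}, of order $\varepsilon|\log\varepsilon|$), and an inner tail absorbed into a small disk (Lemma \ref{|N1|<}, of order $\varepsilon$); your threshold $\varphi=1/\varepsilon$ makes the inner contribution $O(\varepsilon^2)$ rather than $O(\varepsilon)$, an immaterial difference since the tube term dominates either way, and your arc-length ball covering of the outer arc is exactly the content of Lemma \ref{|Ge|<} (with the minor bonus that it does not need the simplicity hypothesis of that lemma, which in the paper is guaranteed by $f(\varphi)>f(\varphi+2\pi)$). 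The genuine difference is the lower bound: the paper invokes Tricot's diameter inequality (Lemma \ref{|Ge|>}), giving $|\Gamma_\varepsilon|\ge 2\varepsilon\,\mbox{\rm diam}(\Gamma)\ge 2\bigl(f(\varphi_1)+f(\varphi_1+\pi)\bigr)\varepsilon$ in one line with nothing to verify, whereas you apply Proposition \ref{length} to the single closed coil $\{(f(\varphi)\cos\varphi,f(\varphi)\sin\varphi):\varphi_1\le\varphi\le\varphi_1+2\pi\}$, which obliges you to check that this coil is simple and of positive finite length; your verification via $f>0$ and $f(\varphi_1)>f(\varphi_1+2\pi)$ is sound, so both tools (each quoted from Tricot in the paper) do the job, the diameter inequality being merely the more economical choice. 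One further point in your favor: your pairing of area bounds with limit bounds is the logically correct one --- since $\log\varepsilon<0$, the bound $|\Gamma_\varepsilon|\ge c\varepsilon$ yields $\limsup_{\varepsilon\to+0}\log|\Gamma_\varepsilon|/\log\varepsilon\le 1$ and the bound $|\Gamma_\varepsilon|\le C\varepsilon|\log\varepsilon|$ yields $\liminf_{\varepsilon\to+0}\log|\Gamma_\varepsilon|/\log\varepsilon\ge 1$ --- whereas the paper's proof attaches the lower area bound to the $\liminf$ and the upper area bound to the $\limsup$ (the division by the negative quantity $\log\varepsilon$ reverses inequalities); the two estimates combined still force the limit to equal $1$ in both write-ups, so the conclusion $\dim_{\rm B}\Gamma=2-1=1$ is unaffected.
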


From Theorem \ref{criterion_one}, the following corollary follows.

\begin{cor}\label{criterion_one_cor}
 Let $\varphi_1>1$ and let $f \in C[\varphi_1,\infty)$ satisfy
 $\lim_{\varphi\to\infty} f(\varphi)=0$.
 Assume that there exist positive constants
 $\overline{m}$ and $K$ such that, for all $\varphi \ge \varphi_1$,
 \begin{gather*}
  0 < f(\varphi) \le \overline{m} \varphi^{-1}, \\
  - K \varphi^{-1} \le f'(\varphi) \le 0.
 \end{gather*}
 Assume, moreover, that $f'(\varphi)\not\equiv 0$ on $[\varphi,\varphi+2\pi)$
 for each fixed $\varphi \ge \varphi_1$.
 Let $\Gamma= \{ (f(\varphi)\cos \varphi,f(\varphi)\sin \varphi) :
  \varphi \ge \varphi_1 \}$.
 Then, $\dim_{\rm B} \Gamma = 1$.
\end{cor}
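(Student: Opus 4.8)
The plan is to deduce Corollary \ref{criterion_one_cor} directly from Theorem \ref{criterion_one}, by checking that the differential hypotheses stated here (with $f'$ continuous, as in Corollary \ref{cor_criterion}) force the three integral/difference assumptions required by that theorem. The first assumption of Theorem \ref{criterion_one}, namely $0 < f(\varphi) \le \overline{m}\varphi^{-1}$, is word-for-word one of our hypotheses, so it carries over with nothing to prove. It therefore remains to produce the strict decrease condition $0 < f(\varphi) - f(\varphi+2\pi)$ and the length bound $\mbox{\rm length}(\Gamma(\varphi_1,\varphi)) \le M\log\varphi$.

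For the strict-decrease condition, I would apply the fundamental theorem of calculus and the sign hypothesis $f'(\varphi) \le 0$ to write
\[
 f(\varphi) - f(\varphi+2\pi) = -\int_\varphi^{\varphi+2\pi} f'(s)\,ds = \int_\varphi^{\varphi+2\pi} |f'(s)|\,ds .
\]
Monotonicity alone gives only $\ge 0$ here; strictness is exactly where the extra hypothesis enters. Since $f'$ is continuous and, by assumption, not identically zero on $[\varphi,\varphi+2\pi)$, there is a point where $f'<0$ and hence, by continuity, a subinterval on which $|f'|>0$, so the last integral is strictly positive. This yields $f(\varphi)-f(\varphi+2\pi)>0$ for every $\varphi \ge \varphi_1$.

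For the length bound, I would use the polar arc-length formula
\[
 \mbox{\rm length}(\Gamma(\varphi_1,\varphi)) = \int_{\varphi_1}^\varphi \sqrt{f(s)^2 + f'(s)^2}\,ds
\]
together with the two bounds $f(s)\le \overline{m}s^{-1}$ and $|f'(s)|\le K s^{-1}$, which give $\sqrt{f(s)^2+f'(s)^2} \le \sqrt{\overline{m}^2+K^2}\,s^{-1}$. Integrating and then using $\varphi_1>1$ (so that $\log\varphi_1>0$) produces
\[
 \mbox{\rm length}(\Gamma(\varphi_1,\varphi)) \le \sqrt{\overline{m}^2+K^2}\,(\log\varphi - \log\varphi_1) \le M\log\varphi
\]
with $M=\sqrt{\overline{m}^2+K^2}$. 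With all three hypotheses of Theorem \ref{criterion_one} now verified, that theorem immediately gives $\dim_{\rm B}\Gamma = 1$. None of these steps is a genuine obstacle — the whole argument is a routine verification — and the only point needing care is the strictness in the second condition, which truly relies on the non-vanishing assumption rather than on monotonicity alone.
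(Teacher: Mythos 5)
Your proposal is correct and follows essentially the same route as the paper: verify the three hypotheses of Theorem \ref{criterion_one} (the pointwise bound being immediate, strict decrease via integrating $f'\le 0$ with $f'\not\equiv 0$ over a period, and the length bound $\sqrt{\overline{m}^2+K^2}\,\log\varphi$ from the polar arc-length formula using $\varphi_1>1$) and then invoke that theorem. The paper obtains the strict-decrease step by citing the argument in the proof of Corollary \ref{cor_criterion}, which is exactly the computation you wrote out.
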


The proofs of Theorem \ref{criterion_one} and Corollary
\ref{criterion_one_cor} will be given in Section 3.


\section{Box-counting dimension of spirals}

In this section we prove Theorem \ref{criterion} and Corollary
\ref{cor_criterion}.
First, we give a lemma.

\begin{lem}\label{f<}
 Let $\varphi_1>0$ and let $f \in C[\varphi_1,\infty)$ satisfy
 $f(\varphi)>0$ for $\varphi\ge\varphi_1$ and
 $\lim_{\varphi\to\infty} f(\varphi)=0$.
 Assume that there exist positive constants $\overline{a}$ and $\alpha
 \in(0,1)$ such that
 \begin{gather*}
  0 < f(\varphi) - f(\varphi+2\pi) \le \overline{a} \varphi^{-\alpha-1},
  \quad \varphi \ge \varphi_1.
 \end{gather*}
 Then, there exists a positive constant $\overline{m}$ such that
 $f(\varphi) \le \overline{m} \varphi^{-\alpha}$ for 
 $\varphi \ge \varphi_1$.
\end{lem}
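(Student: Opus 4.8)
The plan is to exploit the fact that the hypothesis forces $f$ to decrease strictly along every arithmetic progression of step $2\pi$, and then to telescope. First I would fix an arbitrary $\varphi \ge \varphi_1$ and consider the sequence $\varphi + 2\pi k$, $k = 0,1,2,\dots$. Since $\varphi + 2\pi k \to \infty$ and $\lim_{\varphi\to\infty} f(\varphi) = 0$, we have $f(\varphi + 2\pi N) \to 0$ as $N \to \infty$. The left inequality in the hypothesis guarantees that every term of the telescoping sum is positive, so the partial sums $\sum_{k=0}^{N-1}\bigl(f(\varphi+2\pi k) - f(\varphi+2\pi(k+1))\bigr) = f(\varphi) - f(\varphi + 2\pi N)$ increase to $f(\varphi)$, yielding the convergent representation
\[
 f(\varphi) = \sum_{k=0}^{\infty}\bigl(f(\varphi+2\pi k) - f(\varphi+2\pi(k+1))\bigr).
\]

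Next I would invoke the right inequality $0 < f(\psi) - f(\psi + 2\pi) \le \overline{a}\,\psi^{-\alpha-1}$ with $\psi = \varphi + 2\pi k$ to bound each summand, obtaining
\[
 f(\varphi) \le \overline{a}\sum_{k=0}^{\infty}(\varphi + 2\pi k)^{-\alpha-1}.
\]
The remaining task is to estimate this series. Since the map $s \mapsto (\varphi + 2\pi s)^{-\alpha-1}$ is positive and decreasing on $[0,\infty)$, a standard integral comparison gives $\sum_{k=1}^{\infty}(\varphi+2\pi k)^{-\alpha-1} \le \int_0^\infty (\varphi + 2\pi s)^{-\alpha-1}\,ds$, so that separating the $k=0$ term,
\[
 \sum_{k=0}^{\infty}(\varphi + 2\pi k)^{-\alpha-1}
 \le \varphi^{-\alpha-1} + \int_0^\infty (\varphi + 2\pi s)^{-\alpha-1}\,ds
 = \varphi^{-\alpha-1} + \frac{1}{2\pi\alpha}\,\varphi^{-\alpha},
\]
the integral being elementary because $\alpha > 0$.

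Finally I would absorb the extra power: since $\varphi \ge \varphi_1 > 0$, we have $\varphi^{-\alpha-1} = \varphi^{-1}\varphi^{-\alpha} \le \varphi_1^{-1}\varphi^{-\alpha}$, and combining the three displays gives $f(\varphi) \le \overline{m}\,\varphi^{-\alpha}$ with the explicit constant $\overline{m} = \overline{a}\bigl(\varphi_1^{-1} + (2\pi\alpha)^{-1}\bigr)$, which is independent of $\varphi$. This completes the argument. There is no genuine obstacle here; the only point requiring a little care is justifying the telescoping representation (which rests squarely on $f(\varphi+2\pi N)\to 0$ together with the positivity of the increments) and carrying out the integral comparison cleanly, both of which are routine.
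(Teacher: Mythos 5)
Your proof is correct and follows essentially the same route as the paper's: telescoping $f$ along the step-$2\pi$ progression, bounding each increment by the hypothesis, and controlling the resulting series $\sum_k (\varphi+2\pi k)^{-\alpha-1}$ by comparison with $\int (\varphi+2\pi s)^{-\alpha-1}\,ds$. The only difference is cosmetic: in the sum-versus-integral step you peel off the $k=0$ term, whereas the paper shifts indices using the ratio bound $M_1=[1+(2\pi/\varphi_1)]^{\alpha+1}$; both give an explicit constant $\overline{m}$ independent of $\varphi$.
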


\begin{proof}
 Let $\varphi \ge \varphi_1$.
 Then, there exist $N \in {\bf N}\cup\{0\}$ and
 $\varphi_0 \in [\varphi_1,\varphi_1+2\pi)$ such that
 $\varphi=\varphi_0+2N\pi$.
 Let $n \in {\bf N}$ with $n>N$.
 It follows that
 \begin{align*}
  f(\varphi)
   & = f(\varphi_0+2N\pi) \\
   & = f(\varphi_0+2(n+1)\pi)
     + \sum_{k=N}^n [ f(\varphi_0+2k\pi) - f(\varphi_0+2(k+1)\pi) ] \\
   & \le f(\varphi_0+2(n+1)\pi)
     + \sum_{k=N}^n \overline{a} (\varphi_0+2k\pi)^{-\alpha-1}.
 \end{align*}
 Since
 \begin{align*}
 \frac{(\varphi_0+2k\pi)^{-\alpha-1}}{(\varphi_0+2(k+1)\pi)^{-\alpha-1}}
   & = \left( \frac{\varphi_0+2(k+1)\pi}{\varphi_0+2k\pi} \right)^{\alpha+1} \\
   & = \left( 1 + \frac{2\pi}{\varphi_0+2k\pi} \right)^{\alpha+1} \\
   & \le \left( 1 + \frac{2\pi}{\varphi_1} \right)^{\alpha+1},
   \quad k \in {\bf N} \cup \{0\},
 \end{align*}
 we have
 \[
  (\varphi_0+2k\pi)^{-\alpha-1} \le M_1 (\varphi_0+2(k+1)\pi)^{-\alpha-1},
  \quad k \in {\bf N} \cup \{0\},
 \]
 where $M_1=[1+(2\pi/\varphi_1)]^{\alpha+1}$. 
 Therefore,
 \begin{align*}
  f(\varphi)
     & \le f(\varphi_0+2(n+1)\pi)
     + \sum_{k=N}^n \overline{a} M_1 (\varphi_0+2(k+1)\pi)^{-\alpha-1} \\
   & = f(\varphi_0+2(n+1)\pi)
     + \overline{a}M_1 \sum_{k=N}^n \int_k^{k+1}
       (\varphi_0+2(k+1)\pi)^{-\alpha-1} dt \\
  & \le f(\varphi_0+2(n+1)\pi)
     + \overline{a}M_1 \sum_{k=N}^n \int_k^{k+1}
       (\varphi_0+2\pi t)^{-\alpha-1} dt \\
   & = f(\varphi_0+2(n+1)\pi)
     + \overline{a}M_1 \int_N^{n+1} (\varphi_0+2\pi t)^{-\alpha-1} dt  \\
   & = f(\varphi_0+2(n+1)\pi)
     + \frac{\overline{a}M_1}{2\pi\alpha}
         \left[ (\varphi_0+2N\pi)^{-\alpha}
              - (\varphi_0+2(n+1)\pi)^{-\alpha} \right].
 \end{align*}
 Letting $n \to \infty$, we obtain
 \[
  f(\varphi)
  \le \frac{\overline{a}M_1}{2\pi\alpha} (\varphi_0+2N\pi)^{-\alpha}
  = \frac{\overline{a}M_1}{2\pi\alpha} \varphi^{-\alpha}.
 \]
\end{proof}

Hereafter, in this section, we assume that all assumptions of Theorem
\ref{criterion}.
Then, by Lemma \ref{f<}, there exists a positive constant $\overline{m}$
such that $f(\varphi) \le \overline{m} \varphi^{-\alpha}$ for
$\varphi \ge \varphi_1$.

Let $\varepsilon \in (0,1)$ be sufficiently small.
We use the following notation:
\[
 \varphi_2(\varepsilon)
  = \left( \frac{2 \overline{a}}{\varepsilon} \right)^{\frac{1}{\alpha+1}};
\]
\begin{equation*}
 \Gamma(\psi_1,\psi_2) 
  = \{ (f(\varphi)\cos\varphi,f(\varphi)\sin\varphi) : 
       \psi_1 \le \varphi < \psi_2 \};
\end{equation*}
\[
 T(\Gamma,\varepsilon) 
  = \Gamma(\varphi_1,\varphi_2(\varepsilon))_\varepsilon;
\]
\[
 N(\Gamma,\varepsilon) 
  = \Gamma(\varphi_2(\varepsilon),\infty)_\varepsilon,
\]
where $\Gamma_\varepsilon$ denotes the $\varepsilon$-neighborhood of 
$\Gamma$ defined by \eqref{Ge}.
Then, $\Gamma_\varepsilon = T(\Gamma,\varepsilon) \cup N(\Gamma,\varepsilon)$.

\begin{lem}\label{A<N}
 \[
 \{(r\cos\varphi,r\sin\varphi) :
  0 \le r \le f(\varphi), \
  \varphi \in [\varphi_2(\varepsilon),\varphi_2(\varepsilon)+2\pi) \}
  \subset N(\Gamma,\varepsilon).  
 \]
\end{lem}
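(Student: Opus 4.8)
The plan is to exploit the radial structure of the spiral. The set on the left-hand side is the filled region swept out by one full revolution, bounded on the outside by the arc $\Gamma(\varphi_2(\varepsilon),\varphi_2(\varepsilon)+2\pi)$ and reaching down to the origin. To show it lies in $N(\Gamma,\varepsilon)=\Gamma(\varphi_2(\varepsilon),\infty)_\varepsilon$, I would fix an arbitrary point $P$ of this set, written in polar coordinates as $(r\cos\varphi,r\sin\varphi)$ with $\varphi\in[\varphi_2(\varepsilon),\varphi_2(\varepsilon)+2\pi)$ and $0\le r\le f(\varphi)$, and produce a point of the spiral tail within Euclidean distance $\varepsilon$. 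The key observation is that the ray through the origin in the direction of angle $\varphi$ meets the tail $\Gamma(\varphi_2(\varepsilon),\infty)$ at exactly the radii $c_k:=f(\varphi+2k\pi)$, $k=0,1,2,\dots$, and each crossing point $(c_k\cos\varphi,c_k\sin\varphi)$ indeed belongs to $\Gamma(\varphi_2(\varepsilon),\infty)$ because $\varphi+2k\pi\ge\varphi\ge\varphi_2(\varepsilon)$.

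The crux is a gap estimate along this ray. Since $f$ is positive, tends to $0$, and satisfies $f(\varphi)-f(\varphi+2\pi)>0$, the sequence $c_0>c_1>c_2>\cdots$ decreases to $0$, with $c_0=f(\varphi)\ge r$. By the hypothesis $0<f(\varphi)-f(\varphi+2\pi)\le\overline{a}\varphi^{-\alpha-1}$, consecutive gaps satisfy $c_k-c_{k+1}\le\overline{a}(\varphi+2k\pi)^{-\alpha-1}\le\overline{a}\,\varphi_2(\varepsilon)^{-\alpha-1}$, where the last inequality uses $\varphi+2k\pi\ge\varphi_2(\varepsilon)$ together with the monotonicity of $s\mapsto s^{-\alpha-1}$. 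Here the definition of $\varphi_2(\varepsilon)$ is calibrated precisely so that $\overline{a}\,\varphi_2(\varepsilon)^{-\alpha-1}=\overline{a}(2\overline{a}/\varepsilon)^{-1}=\varepsilon/2$; thus every gap along the ray is at most $\varepsilon/2$.

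With all gaps bounded by $\varepsilon/2$ and $c_0\ge r\ge 0=\inf_k c_k$, the value $r$ is trapped between two consecutive terms $c_{k+1}$ and $c_k$ (or lies below all of them when $r=0$, in which case one simply chooses $k$ large enough that $c_k\le\varepsilon/2$), so there exists $k\ge 0$ with $|r-c_k|\le\varepsilon/2$. Because $P$ and the crossing point $(c_k\cos\varphi,c_k\sin\varphi)$ lie on the same ray, their Euclidean distance equals $|r-c_k|\le\varepsilon/2<\varepsilon$, and that crossing point lies on $\Gamma(\varphi_2(\varepsilon),\infty)$; hence $P\in N(\Gamma,\varepsilon)$, as required. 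I expect the only real subtlety to be this final sandwiching step — selecting the nearest crossing and separately covering the case $r=0$ — but it is routine once the bound $\varepsilon/2$ is in hand; the genuine content of the lemma is the exact calibration of $\varphi_2(\varepsilon)$ that forces consecutive spiral crossings to be $\varepsilon/2$-close along every ray.
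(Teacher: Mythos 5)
Your proof is correct and follows essentially the same route as the paper: both exploit the calibration $\overline{a}\,\varphi_2(\varepsilon)^{-\alpha-1}=\varepsilon/2$ to sandwich the radius $r$ between consecutive crossings $f(\varphi+2k\pi)$ of the ray with the spiral tail, and then measure the distance radially. The only difference is cosmetic: the paper asserts the existence of $\varphi_0$ with $f(\varphi_0+2\pi)\le r_0\le f(\varphi_0)$ directly (leaving the degenerate case $r_0=0$ implicit), whereas you construct the crossing sequence explicitly and treat $r=0$ separately, which is in fact slightly more careful.
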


\begin{proof}
 Let 
 \[
  (x_0,y_0) \in \{(r\cos\varphi,r\sin\varphi) :
  0 \le r \le f(\varphi), \
  \varphi \in [\varphi_2(\varepsilon),\varphi_2(\varepsilon)+2\pi) \}.
 \]
 Set $r_0=\sqrt{x_0^2+y_0^2}$.
 Then, there exists $\varphi_0 \ge \varphi_2(\varepsilon)$ such that
 $(x_0,y_0)=(r_0\cos\varphi_0,r_0\sin\varphi_0)$ and
 \[
  f(\varphi_0+2\pi) \le r_0 \le f(\varphi_0).
 \]
 We have
 \begin{align*}
  0 \le f(\varphi_0) - r_0
  \le f(\varphi_0) - f(\varphi_0+2\pi)
  \le \overline{a} \varphi_0^{-\alpha-1}
  \le \overline{a} (\varphi_2(\varepsilon))^{-\alpha-1}
  = \frac{\varepsilon}{2}.
 \end{align*}
 Therefore, 
 \[
  d((x_0,y_0),(f(\varphi_0)\cos\varphi_0,f(\varphi_0)\sin\varphi_0))
  = f(\varphi_0) - r_0 < \varepsilon,
 \]
 which means that $(x_0,y_0) \in N(\Gamma,\varepsilon)$.
\end{proof}

\begin{lem}\label{<|N|<}
 \[
  \pi \underline{m}^2
  \left[(2\overline{a})^{\frac{1}{\alpha+1}}+2\pi \right]^{-2\alpha}
  \varepsilon^{\frac{2\alpha}{\alpha+1}} \le |N(\Gamma,\varepsilon)| \le
  \pi \left[ \overline{m} (2\overline{a})^{-\frac{\alpha}{\alpha+1}} + 1
  \right]^2 \varepsilon^{\frac{2\alpha}{\alpha+1}}.
 \]
\end{lem}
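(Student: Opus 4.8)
The plan is to prove the two estimates separately: the lower bound by exhibiting an explicit large subset of $N(\Gamma,\varepsilon)$, and the upper bound by enclosing $N(\Gamma,\varepsilon)$ in a disk centered at the origin. Both reductions are already prepared for us: Lemma \ref{A<N} supplies a concrete planar region sitting inside $N(\Gamma,\varepsilon)$, while Lemma \ref{f<} furnishes the decay estimate $f(\varphi)\le\overline{m}\varphi^{-\alpha}$ that caps the outer radius of the tail. Throughout, the substitution $\varphi_2(\varepsilon)=(2\overline{a})^{1/(\alpha+1)}\varepsilon^{-1/(\alpha+1)}$ is what converts radii into the displayed powers of $\varepsilon$.

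For the lower bound I would start from the inclusion in Lemma \ref{A<N}, which gives
\[
 |N(\Gamma,\varepsilon)| \ge \left| \{(r\cos\varphi,r\sin\varphi) : 0\le r\le f(\varphi),\ \varphi\in[\varphi_2(\varepsilon),\varphi_2(\varepsilon)+2\pi)\} \right|.
\]
Since $\varphi$ sweeps an interval of length exactly $2\pi$, the angular coordinate traverses a full revolution once, so passing to polar coordinates the right-hand measure equals $\tfrac12\int_{\varphi_2(\varepsilon)}^{\varphi_2(\varepsilon)+2\pi} f(\varphi)^2\,d\varphi$. I would then insert the hypothesis $f(\varphi)\ge\underline{m}\varphi^{-\alpha}$ and bound the integrand from below by its value at the right endpoint, namely $\underline{m}^2(\varphi_2(\varepsilon)+2\pi)^{-2\alpha}$ (legitimate because $\varphi^{-2\alpha}$ is decreasing), which yields $|N(\Gamma,\varepsilon)|\ge\pi\underline{m}^2(\varphi_2(\varepsilon)+2\pi)^{-2\alpha}$. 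Finally, using $\varepsilon^{1/(\alpha+1)}<1$ I write $\varphi_2(\varepsilon)+2\pi\le[(2\overline{a})^{1/(\alpha+1)}+2\pi]\varepsilon^{-1/(\alpha+1)}$, and raising to the power $-2\alpha$ produces the stated lower bound with the factor $\varepsilon^{2\alpha/(\alpha+1)}$.

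For the upper bound I would observe that every point of the tail $\Gamma(\varphi_2(\varepsilon),\infty)$ lies at distance $f(\varphi)\le\overline{m}\varphi^{-\alpha}\le\overline{m}(\varphi_2(\varepsilon))^{-\alpha}$ from the origin, so the tail is contained in the closed disk of that radius, and hence $N(\Gamma,\varepsilon)$ is contained in the concentric disk of radius $\overline{m}(\varphi_2(\varepsilon))^{-\alpha}+\varepsilon$. This gives $|N(\Gamma,\varepsilon)|\le\pi\bigl(\overline{m}(\varphi_2(\varepsilon))^{-\alpha}+\varepsilon\bigr)^2$. Substituting $(\varphi_2(\varepsilon))^{-\alpha}=(2\overline{a})^{-\alpha/(\alpha+1)}\varepsilon^{\alpha/(\alpha+1)}$ and using $\varepsilon\le\varepsilon^{\alpha/(\alpha+1)}$ (valid for $\varepsilon\in(0,1)$ since $\alpha/(\alpha+1)<1$) to absorb the lone $\varepsilon$, the bracket becomes $[\overline{m}(2\overline{a})^{-\alpha/(\alpha+1)}+1]\varepsilon^{\alpha/(\alpha+1)}$, and squaring delivers exactly the claimed upper bound.

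No step is genuinely deep; the only places that need care are the justification that the region in Lemma \ref{A<N} is swept out precisely once, so that the polar area formula $\tfrac12\int f^2\,d\varphi$ applies, and the bookkeeping with the exponents of $\varepsilon$. In particular, the two elementary inequalities $\varepsilon^{1/(\alpha+1)}<1$ and $\varepsilon\le\varepsilon^{\alpha/(\alpha+1)}$ are what collapse the mixed radius expressions into clean powers of $\varepsilon$ and reproduce the exact constants appearing in the statement.
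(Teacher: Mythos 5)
Your proof is correct and takes essentially the same route as the paper: the lower bound rests on the inclusion of Lemma \ref{A<N} (the paper inscribes the full disk of radius $\min_{[\varphi_2(\varepsilon),\varphi_2(\varepsilon)+2\pi]}f$ in the region $A$ instead of computing $\tfrac12\int f^2\,d\varphi$, but both land on the same intermediate bound $\pi\underline{m}^2(\varphi_2(\varepsilon)+2\pi)^{-2\alpha}$), and the upper bound encloses $N(\Gamma,\varepsilon)$ in a disk about the origin of radius $\overline{m}(\varphi_2(\varepsilon))^{-\alpha}+\varepsilon$, exactly as the paper does via $r^*(\varepsilon)$ and Lemma \ref{f<}. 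The exponent bookkeeping and the final constants agree with the paper's.
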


\begin{proof}
 Set
 \[
  r_*(\varepsilon)
  = \min_{\psi \in [\varphi_2(\varepsilon),\varphi_2(\varepsilon)+2\pi]}
   f(\psi),
  \quad
  r^*(\varepsilon)
  = \max_{\psi \in [\varphi_2(\varepsilon),\varphi_2(\varepsilon)+2\pi]}
 f(\psi),
 \]
 and
 \[
  A=\{(r\cos\varphi,r\sin\varphi) :
  0 \le r \le f(\varphi), \
  \varphi \in [\varphi_2(\varepsilon),\varphi_2(\varepsilon)+2\pi) \}.
 \]
 Then, we easily find that
 \[
  \{ (r\cos\varphi,r\sin\varphi): 0 \le r \le r_*(\varepsilon), \
                                  \varphi \in {\bf R} \}
  \subset A.
 \]
 Therefore, Lemma \ref{A<N} implies that
 \begin{align*}
  |N(\Gamma,\varepsilon)|
  & \ge |A| \\
  & \ge \pi (r_*(\varepsilon))^2 \\
  & \ge \pi \left( \min_{\psi \in
  [\varphi_2(\varepsilon),\varphi_2(\varepsilon)+2\pi]} \underline{m}
  \psi^{-\alpha} \right)^2 \\
  & = \pi \underline{m}^2 (\varphi_2(\varepsilon)+2\pi)^{-2\alpha} \\
  & = \pi \underline{m}^2
  \left[(2\overline{a})^{\frac{1}{\alpha+1}}+2\pi
  \varepsilon^{\frac{1}{\alpha+1}} \right]^{-2\alpha}
  \varepsilon^{\frac{2\alpha}{\alpha+1}} \\
  & \ge \pi \underline{m}^2
  \left[(2\overline{a})^{\frac{1}{\alpha+1}}+2\pi \right]^{-2\alpha}
  \varepsilon^{\frac{2\alpha}{\alpha+1}},
 \end{align*}
 since $\varepsilon \in (0,1)$.

 Let $(x,y) \in N(\Gamma,\varepsilon)$.
 Then, there exists $(x_0,y_0) \in \Gamma(\varphi_2(\varepsilon),\infty)$
 and
 \[
  d((x,y),(x_0,y_0))<\varepsilon.
 \]
 Hence,
 \[
  d((x,y),(0,0))
  \le d((x,y),(x_0,y_0)) + d((x_0,y_0),(0,0))
  < \varepsilon + r^*(\varepsilon).
 \]
 It follows that
 \begin{align*}
  |N(\Gamma,\varepsilon)|
  & \le \pi (\varepsilon + r^*(\varepsilon))^2 \\
  & \le \pi \left( \varepsilon
   + \max_{\psi \in [\varphi_2(\varepsilon),\varphi_2(\varepsilon)+2\pi]}
  \overline{m} \psi^{-\alpha} \right)^2 \\
  & = \pi \left[ \varepsilon
  + \overline{m} (\varphi_2(\varepsilon))^{-\alpha} \right]^2 \\
  & = \pi \left[ \varepsilon^{\frac{1}{\alpha+1}}
  + \overline{m} (2\overline{a})^{-\frac{\alpha}{\alpha+1}} \right]^2
    \varepsilon^{\frac{2\alpha}{\alpha+1}} \\
  & \le \pi \left[ 1 + \overline{m} (2\overline{a})^{-\frac{\alpha}{\alpha+1}}
  \right]^2 \varepsilon^{\frac{2\alpha}{\alpha+1}}.
 \end{align*}
\end{proof}

\begin{lem}\label{|Ge|<}
 Let $x$, $y \in C[a,b]$ and let
 \[
  G = \{ (x(s),y(s)) : a \le s \le b \}.
 \]
 Assume that $(x(s),y(s))\ne(x(t),y(t))$ for $a \le s < t \le b$.
 Then,
 \[
  |G_\varepsilon|
   \le 4 \pi \varepsilon \,\mbox{\rm length}(G)
     + 4 \pi \varepsilon^2, \quad \varepsilon>0.
 \]
\end{lem}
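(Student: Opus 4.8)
The plan is to reduce to the rectifiable case and then cover $G$ by a controlled number of short sub-arcs, bounding the $\varepsilon$-neighborhood of each one by a single disk. If $\mathrm{length}(G)=\infty$ the right-hand side is infinite and there is nothing to prove, so I assume $L:=\mathrm{length}(G)<\infty$. Since $x,y\in C[a,b]$ and the curve is rectifiable, the arc-length function $\ell(s)=\mathrm{length}(\{(x(\sigma),y(\sigma)):a\le\sigma\le s\})$ is continuous and nondecreasing with $\ell(a)=0$ and $\ell(b)=L$. Continuity here comes from the continuity of $(x,y)$ together with finiteness of the variation.

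Using continuity of $\ell$ and the intermediate value theorem, I would choose a partition $a=s_0<s_1<\dots<s_n=b$ with $\ell(s_i)-\ell(s_{i-1})=\varepsilon$ for $1\le i\le n-1$ and $\ell(s_n)-\ell(s_{n-1})\le\varepsilon$; this forces $n\le L/\varepsilon+1$. Writing $G_i=\{(x(s),y(s)):s_{i-1}\le s\le s_i\}$ and $p_i=(x(s_{i-1}),y(s_{i-1}))$, every point of $G_i$ lies within Euclidean distance $\varepsilon$ of $p_i$, because the straight-line distance is dominated by the arc length along $G_i$, which is at most $\varepsilon$. Hence $G_i$ is contained in the closed disk of radius $\varepsilon$ centered at $p_i$, and its $\varepsilon$-neighborhood $(G_i)_\varepsilon$ is contained in the disk of radius $2\varepsilon$ centered at $p_i$, so $|(G_i)_\varepsilon|\le\pi(2\varepsilon)^2=4\pi\varepsilon^2$.

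Finally, from $G=\bigcup_{i=1}^n G_i$ one gets $G_\varepsilon=\bigcup_{i=1}^n (G_i)_\varepsilon$, and subadditivity of Lebesgue measure yields $|G_\varepsilon|\le\sum_{i=1}^n|(G_i)_\varepsilon|\le 4\pi\varepsilon^2 n\le 4\pi\varepsilon^2\,(L/\varepsilon+1)=4\pi\varepsilon\,L+4\pi\varepsilon^2$, which is exactly the asserted bound. The only genuinely delicate point is the existence of the arc-length partition: it relies on the continuity of $\ell$, i.e.\ on the fact that the total-variation function of a continuous function of bounded variation is itself continuous. I would note in passing that the simplicity hypothesis $(x(s),y(s))\ne(x(t),y(t))$ for $s<t$ is not actually used in this upper estimate, and that the constant $4\pi$ is far from optimal; it is nonetheless harmless for the subsequent dimension computations, which only require the order $\varepsilon\,\mathrm{length}(G)$.
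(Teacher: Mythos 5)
Your proof is correct, but it takes a genuinely different route from the paper's. You partition $G$ by \emph{arc length}: after discarding the trivial non-rectifiable case, you use continuity of the arc-length function $\ell$ to cut the curve into $n\le L/\varepsilon+1$ pieces of arc length at most $\varepsilon$, trap each piece in an $\varepsilon$-disk via the chord-versus-arc inequality, and conclude by subadditivity. The paper instead partitions by \emph{metric balls}: it defines $s_{i+1}$ greedily as the last parameter for which the curve has stayed within distance $\varepsilon$ of $(x(s_i),y(s_i))$, shows $G_\varepsilon\subset\bigcup_i B_{2\varepsilon}(x(s_i),y(s_i))$, and then bounds the number $N$ of pieces from the elementary estimate $\mathrm{length}(G)\ge\sum_{i=1}^{N-1}d\bigl((x(s_i),y(s_i)),(x(s_{i+1}),y(s_{i+1}))\bigr)=(N-1)\varepsilon$, since consecutive greedy points are exactly $\varepsilon$ apart. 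The trade-off: your argument's count of pieces is immediate, but it leans on the standard-yet-nontrivial fact that the total-variation (arc-length) function of a continuous rectifiable curve is continuous, and it needs the separate case $L=\infty$; the paper's argument needs neither (the chord-sum bound is meaningful, and the conclusion trivial, even when the length is infinite), its only delicate point being the termination of the greedy sequence, which follows from uniform continuity of $(x,y)$. Both routes produce the same constant $4\pi$, and your closing observations are accurate: the simplicity hypothesis is not needed for this upper bound in either proof, and only the order $\varepsilon\,\mathrm{length}(G)$ matters downstream.
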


\begin{proof}
 The proof is similar to the proof of Lemma 26 in \cite{PT13}.
 Let $\varepsilon>0$. 
 Set $s_1=a$ and
 \[
  s_{i+1} = \max \{ s \in [s_i,b] : d((x(t),y(t)),(x(s_i),y(s_i))) \le
  \varepsilon, \ t \in [s_i,s] \}
 \]
 for $i= 1,2,\cdots$.
 Then, there exists $n \ge 2$ such that $s_n=b$.
 Set $N=\max\{ i \in {\bf N} : s_i < b  \}$.
 We find that $N \ge 1$,
 \[
  a = s_1 < s_2 < \cdots < s_i < s_{i+1} < \cdots < s_N < s_{N+1}= b,
 \]
 and if $N \ge 2$, then
 \[
  d((x(s_i),y(s_i)),(x(s_{i+1}),y(s_{i+1}))) = \varepsilon, \quad
  i= 1,2,\cdots,N-1.
 \]
 We will prove that
 \begin{equation}
  G_\varepsilon \subset \bigcup_{i=1}^{N} B_{2\varepsilon}(x(s_i),y(s_i)),
  \label{Ge<B2e}
 \end{equation}
 where 
 \[ 
  B_{2\varepsilon}(x_0,y_0) = 
  \{ (x,y) \in {\bf R}^2 : d((x_0,y_0),(x,y)) \le 2 \varepsilon \}.
 \]
 Let $(x_1,y_1) \in G_\varepsilon$.
 Then, there exists $\sigma \in [a,b]$ such that
 \[
  d((x_1,y_1),(x(\sigma),y(\sigma))) \le \varepsilon.  
 \]
 Because of the definition of $s_i$, we find that
 $\sigma \in [s_k,s_{k+1}]$ for some $k \in \{1,2,\cdots,N\}$, which
 implies that
 \[
  d((x(\sigma),y(\sigma)),(x(s_k),y(s_k))) \le \varepsilon.
 \]
 Hence, it follows that
 \begin{multline*}
  d((x_1,y_1),(x(s_k),y(s_k))) \\
  \le d((x_1,y_1),(x(\sigma),y(\sigma)))
    + d((x(\sigma),y(\sigma)),(x(s_k),y(s_k)))
  \le 2 \varepsilon,
 \end{multline*}
 which means that $(x_1,y_1) \in B_{2\varepsilon}(x(s_k),y(s_k))$.
 Therefore, we obtain \eqref{Ge<B2e}.
 By \eqref{Ge<B2e}, we conclude that
 \begin{equation}
 |G_\varepsilon|
  \le \sum_{i=1}^{N} |B_{2\varepsilon}(x(s_i),y(s_i))|
  = 4 N \pi \varepsilon^2.
   \label{Ge(y)<4Npie2}
 \end{equation}
 When $N=1$, from \eqref{Ge(y)<4Npie2} it follows that
 \[
 |G_\varepsilon| 
 \le 4 \pi \varepsilon^2
 \le 4 \pi \varepsilon \,\mbox{length}(G) + 4\pi \varepsilon^2.
 \]
 Now, we assume that $N\ge 2$.
 We observe that
 \begin{align*}
  \mbox{length}(G)
  & \ge \sum_{i=1}^N d((x(s_i),y(s_i)),(x(s_{i+1}),y(s_{i+1}))) \\
  & \ge \sum_{i=1}^{N-1} d((x(s_i),y(s_i)),(x(s_{i+1}),y(s_{i+1}))) \\
  & = (N-1) \varepsilon,
 \end{align*}
 that is,
 \begin{equation}
  N \varepsilon \le \mbox{length}(G) + \varepsilon.
   \label{Ne<L+e}
 \end{equation}
 Combining \eqref{Ge(y)<4Npie2} with \eqref{Ne<L+e}, we obtain
 \[
 |G_\varepsilon|
 \le 4 \pi \varepsilon \,\mbox{length}(G) + 4\pi \varepsilon^2.
\]
\end{proof}

\begin{lem}\label{|T|<}
 \[
 |T(\Gamma,\varepsilon)|
 \le 4 \pi \left[ M (2 \overline{a})^{\frac{1-\alpha}{\alpha+1}} + 1 \right]
  \varepsilon^{\frac{2\alpha}{\alpha+1}}. 
 \]
\end{lem}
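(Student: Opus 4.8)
The plan is to realize $T(\Gamma,\varepsilon)$ as the $\varepsilon$-neighborhood of a \emph{single} finite curve segment and then invoke Lemma \ref{|Ge|<}. Write $x(s)=f(s)\cos s$ and $y(s)=f(s)\sin s$ for $s\in[\varphi_1,\varphi_2(\varepsilon)]$, and set $G=\{(x(s),y(s)):\varphi_1\le s\le\varphi_2(\varepsilon)\}$. Since $d((x,y),S)=d((x,y),\overline{S})$, the closed $\varepsilon$-neighborhood is unaffected by whether the endpoint $s=\varphi_2(\varepsilon)$ is included, so $G_\varepsilon=T(\Gamma,\varepsilon)$. To apply Lemma \ref{|Ge|<} I must check that $G$ is simple: two parameter values $s\ne t$ yield the same point only when $t-s$ is a nonzero multiple of $2\pi$ and $f(s)=f(t)$; but the former condition together with the strict monotonicity of $f$ along $\{s+2k\pi\}$ guaranteed by $f(\varphi)-f(\varphi+2\pi)>0$ forces $f(s)\ne f(t)$, a contradiction. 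Hence $G$ is simple and of finite length, and Lemma \ref{|Ge|<} yields
\[
 |T(\Gamma,\varepsilon)|\le 4\pi\varepsilon\,\mbox{\rm length}(\Gamma(\varphi_1,\varphi_2(\varepsilon)))+4\pi\varepsilon^2 .
\]

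Next I would feed in the length hypothesis of Theorem \ref{criterion}, namely $\mbox{\rm length}(\Gamma(\varphi_1,\varphi))\le M\varphi^{1-\alpha}$, evaluated at $\varphi=\varphi_2(\varepsilon)$. Substituting the explicit value $\varphi_2(\varepsilon)=(2\overline{a}/\varepsilon)^{1/(\alpha+1)}$ gives
\[
 \mbox{\rm length}(\Gamma(\varphi_1,\varphi_2(\varepsilon)))
 \le M(2\overline{a})^{\frac{1-\alpha}{\alpha+1}}\,\varepsilon^{-\frac{1-\alpha}{\alpha+1}} ,
\]
so that the first term becomes $4\pi M(2\overline{a})^{\frac{1-\alpha}{\alpha+1}}\varepsilon^{\,1-\frac{1-\alpha}{\alpha+1}}$. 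The key bookkeeping is the exponent identity $1-\frac{1-\alpha}{\alpha+1}=\frac{(\alpha+1)-(1-\alpha)}{\alpha+1}=\frac{2\alpha}{\alpha+1}$, which turns the first term into exactly $4\pi M(2\overline{a})^{\frac{1-\alpha}{\alpha+1}}\varepsilon^{\frac{2\alpha}{\alpha+1}}$.

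Finally I would absorb the leftover $4\pi\varepsilon^2$ term. Since $\alpha\in(0,1)$ we have $\frac{2\alpha}{\alpha+1}<2$, and because we only work with $\varepsilon\in(0,1)$ the smaller exponent dominates, giving $\varepsilon^2\le\varepsilon^{2\alpha/(\alpha+1)}$ and hence $4\pi\varepsilon^2\le 4\pi\varepsilon^{2\alpha/(\alpha+1)}$. Adding this to the first term gives
\[
 |T(\Gamma,\varepsilon)|
 \le 4\pi\Bigl[M(2\overline{a})^{\frac{1-\alpha}{\alpha+1}}+1\Bigr]\varepsilon^{\frac{2\alpha}{\alpha+1}},
\]
which is the claimed bound. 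I do not expect a genuine obstacle here: the argument is a direct application of Lemma \ref{|Ge|<} together with the given length estimate, and the only points requiring a little care are the verification that the truncated spiral $G$ is simple (needed to invoke Lemma \ref{|Ge|<}) and the exponent arithmetic that produces the critical power $\varepsilon^{2\alpha/(\alpha+1)}$.
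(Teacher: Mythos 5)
Your proof is correct and follows essentially the same route as the paper's: apply Lemma \ref{|Ge|<}, insert the length hypothesis at $\varphi=\varphi_2(\varepsilon)$, and absorb the $4\pi\varepsilon^2$ term using $\varepsilon\in(0,1)$ and $\frac{2\alpha}{\alpha+1}<2$. The only difference is that you explicitly verify the simplicity hypothesis of Lemma \ref{|Ge|<} (via $f(\varphi)-f(\varphi+2\pi)>0$) and the irrelevance of the endpoint, steps the paper leaves implicit; this is a welcome addition but not a different argument.
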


\begin{proof}
 From Lemma \ref{|Ge|<}, it follows that
 \begin{align*}
  |T(\Gamma,\varepsilon)| 
   & \le 4 \pi \varepsilon
   \,\mbox{length}(\Gamma(\varphi_1,\varphi_2(\varepsilon))) 
   + 4\pi \varepsilon^2 \\
   & \le 4 \pi \varepsilon M (\varphi_2(\varepsilon))^{1-\alpha} 
   + 4\pi \varepsilon^2 \\
  & = 4 \pi M (2 \overline{a})^{\frac{1-\alpha}{\alpha+1}} 
   \varepsilon^{\frac{2\alpha}{\alpha+1}}
  + 4\pi \varepsilon^2 \\
  & = 4 \pi \left[ M (2 \overline{a})^{\frac{1-\alpha}{\alpha+1}} 
  + \varepsilon^{\frac{2}{\alpha+1}} \right]
  \varepsilon^{\frac{2\alpha}{\alpha+1}} \\
  & \le 4 \pi \left[ M (2 \overline{a})^{\frac{1-\alpha}{\alpha+1}} + 1 \right]
  \varepsilon^{\frac{2\alpha}{\alpha+1}}.
 \end{align*} 
\end{proof}

Now, we are ready to prove Theorem \ref{criterion}.
 
\begin{proof}[Proof of Theorem \ref{criterion}]
 Since  
 \[
  |\Gamma_\varepsilon| \ge |N(\Gamma,\varepsilon)|
 \]
 and
 \[
  |\Gamma_\varepsilon| \le |T(\Gamma,\varepsilon)| + |N(\Gamma,\varepsilon)|,
 \]
 Lemmas \ref{<|N|<} and \ref{|T|<} imply that there exist positive
 constants $C_1$ and $C_2$ such that
 \[
  C_1 \varepsilon^{\frac{2\alpha}{\alpha+1}}
   \le |\Gamma_\varepsilon| \le 
  C_2 \varepsilon^{\frac{2\alpha}{\alpha+1}}
 \]
 for all sufficiently small $\varepsilon \in (0,1)$.
 Consequently, $\dim_{\rm B} \Gamma = 2/(1+\alpha)$.
\end{proof}

\begin{proof}[Proof of Corollary \ref{cor_criterion}]
 Let $\varphi \ge \varphi_1$ be fixed.
 Since $f'(\varphi) \le 0$ and $f'(\varphi)\not\equiv 0$ on
 $[\varphi,\varphi+2\pi)$, we have
 \[
  0 > \int_\varphi^{\varphi+2\pi} f'(\psi) d\psi
    = f(\varphi+2\pi) - f(\varphi).
 \]
 By the mean value theorem, there exists $c \in (\varphi,\varphi+2\pi)$
 \[
  \frac{f(\varphi+2\pi) - f(\varphi)}{2\pi} = f'(c),
 \]
 which implies that
 \[
  f(\varphi)-f(\varphi+2\pi) 
  = - 2\pi f'(c) 
  \le 2\pi K c^{-\alpha-1}
  \le 2\pi K \varphi^{-\alpha-1}.
 \]
 Then, by Lemma \ref{f<}, there exists a positive constant $\overline{m}$
 such that $f(\psi) \le \overline{m} \psi^{-\alpha}$ for $\psi \ge \varphi_1$.
 Therefore,
 \begin{align*}
  \mbox{\rm length}(\Gamma(\varphi_1,\varphi)) 
  & = \int_{\varphi_1}^\varphi \sqrt{(f(\psi))^2+(f'(\psi))^2} d\psi \\
  & \le \int_{\varphi_1}^\varphi
    \sqrt{(\overline{m}\psi^{-\alpha})^2+(K\psi^{-\alpha-1})^2} d\psi \\
  & = \int_{\varphi_1}^\varphi \psi^{-\alpha}
    \sqrt{\overline{m}^2+K^2\psi^{-2}} d\psi \\
  & \le \sqrt{\overline{m}^2+K^2\varphi_1^{-2}} 
   \int_{\varphi_1}^\varphi \psi^{-\alpha} d\psi \\
  & = \frac{\sqrt{\overline{m}^2+K^2\varphi_1^{-2}}}{1-\alpha}
   ( \varphi^{1-\alpha} - \varphi_1^{1-\alpha} ) \\
  & \le \frac{\sqrt{\overline{m}^2+K^2\varphi_1^{-2}}}{1-\alpha}
    \varphi^{1-\alpha}.
 \end{align*}
 Theorem \ref{criterion} implies that $\dim_{\rm B} \Gamma = 2/(1+\alpha)$.
\end{proof}


\section{Spiral with the box-counting dimension one}

In this section, we prove Theorem \ref{criterion_one} and assume that
all assumptions of Theorem \ref{criterion_one}.
Let $\varepsilon \in (0,\varphi_1^{-2})$ be sufficiently small.
We use the following notation:
\[
 T_1(\Gamma,\varepsilon) 
  = \Gamma(\varphi_1,\varepsilon^{-1/2})_\varepsilon;
\]
\[
 N_1(\Gamma,\varepsilon) 
  = \Gamma(\varepsilon^{-1/2},\infty)_\varepsilon.
\]
where $\Gamma(\psi_1,\psi_2) 
  = \{ (f(\varphi)\cos\varphi,f(\varphi)\sin\varphi) : 
       \psi_1 \le \varphi < \psi_2 \}$.
In the same way of the proof of Lemma \ref{<|N|<}, we have the following
result.

\begin{lem}\label{|N1|<}
 $|N_1(\Gamma,\varepsilon)| \le \pi (\overline{m} + 1)^2 \varepsilon$.
\end{lem}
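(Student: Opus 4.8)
The plan is to follow the upper-bound half of the proof of Lemma \ref{<|N|<}, only now exploiting the stronger decay hypothesis $f(\varphi)\le \overline{m}\varphi^{-1}$ in place of $f(\varphi)\le \overline{m}\varphi^{-\alpha}$. The key observation is that the entire tail $\Gamma(\varepsilon^{-1/2},\infty)$ is trapped in a disk of radius comparable to $\varepsilon^{1/2}$ about the origin: for every $\varphi\ge\varepsilon^{-1/2}$ we have $f(\varphi)\le\overline{m}\varphi^{-1}\le\overline{m}(\varepsilon^{-1/2})^{-1}=\overline{m}\varepsilon^{1/2}$, so each point $(f(\varphi)\cos\varphi,f(\varphi)\sin\varphi)$ with $\varphi\ge\varepsilon^{-1/2}$ lies at distance at most $\overline{m}\varepsilon^{1/2}$ from $(0,0)$.

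Next I would pass to the $\varepsilon$-neighborhood exactly as in Lemma \ref{<|N|<}. Given $(x,y)\in N_1(\Gamma,\varepsilon)$, there is a point $(x_0,y_0)\in\Gamma(\varepsilon^{-1/2},\infty)$ with $d((x,y),(x_0,y_0))<\varepsilon$; combining this with the previous radial bound and the triangle inequality gives $d((x,y),(0,0))<\varepsilon+\overline{m}\varepsilon^{1/2}$. Hence $N_1(\Gamma,\varepsilon)$ is contained in the closed disk centred at the origin of radius $\varepsilon+\overline{m}\varepsilon^{1/2}$, and comparing two-dimensional Lebesgue measures yields
\[
 |N_1(\Gamma,\varepsilon)|\le\pi\bigl(\varepsilon+\overline{m}\varepsilon^{1/2}\bigr)^2
 =\pi\varepsilon\bigl(\varepsilon^{1/2}+\overline{m}\bigr)^2.
\]

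Finally I would absorb the $\varepsilon$-dependence inside the bracket using the standing restriction $\varepsilon\in(0,\varphi_1^{-2})$ together with $\varphi_1>1$, which forces $\varepsilon<1$ and hence $\varepsilon^{1/2}<1$. This replaces $(\varepsilon^{1/2}+\overline{m})^2$ by $(1+\overline{m})^2$ and produces exactly $|N_1(\Gamma,\varepsilon)|\le\pi(\overline{m}+1)^2\varepsilon$, as claimed. There is no serious obstacle; the only point needing care is the \emph{uniformity} of the radial bound — that every point of the tail, not merely those in the first $2\pi$-window after $\varepsilon^{-1/2}$, stays within $\overline{m}\varepsilon^{1/2}$. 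This is immediate from $f(\varphi)\le\overline{m}\varphi^{-1}$ for all $\varphi\ge\varepsilon^{-1/2}$, so, unlike Lemma \ref{<|N|<}, one does not even need the window-monotonicity hypothesis $f(\varphi)>f(\varphi+2\pi)$ for this estimate.
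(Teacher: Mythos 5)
Your proof is correct and is essentially the argument the paper intends: the paper gives no separate proof of Lemma \ref{|N1|<}, stating only that it follows in the same way as Lemma \ref{<|N|<}, and your disk-containment argument (tail radius at most $\overline{m}\varepsilon^{1/2}$ via $f(\varphi)\le\overline{m}\varphi^{-1}$, triangle inequality to get $N_1(\Gamma,\varepsilon)$ inside the disk of radius $\varepsilon+\overline{m}\varepsilon^{1/2}$, area comparison, then $\varepsilon<\varphi_1^{-2}<1$ to absorb the $\varepsilon^{1/2}$ term) is precisely that adaptation with $\alpha=1$ and $\varphi_2(\varepsilon)$ replaced by $\varepsilon^{-1/2}$. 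Your closing remark that the pointwise decay bound makes the window maximum $r^*(\varepsilon)$ and the hypothesis $f(\varphi)>f(\varphi+2\pi)$ unnecessary for this particular estimate is a harmless streamlining of the same method, not a different route.
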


\begin{lem}\label{|T1|<}
 $|T_1(\Gamma,\varepsilon)|
 \le -2 \pi M  \varepsilon \log \varepsilon + 4\pi \varepsilon^2$.
\end{lem}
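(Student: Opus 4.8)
The plan is to mimic the proof of Lemma \ref{|T|<}, since $T_1(\Gamma,\varepsilon)$ is exactly the $\varepsilon$-neighborhood of the truncated spiral $\Gamma(\varphi_1,\varepsilon^{-1/2})$; the only change is that here the length is controlled logarithmically rather than by a power of the truncation parameter. First I would view $G=\{(f(\varphi)\cos\varphi,f(\varphi)\sin\varphi):\varphi_1\le\varphi\le\varepsilon^{-1/2}\}$ as a planar curve parametrized by the continuous functions $x(\varphi)=f(\varphi)\cos\varphi$ and $y(\varphi)=f(\varphi)\sin\varphi$ on the closed interval $[\varphi_1,\varepsilon^{-1/2}]$; this is legitimate because $\varepsilon\in(0,\varphi_1^{-2})$ forces $\varepsilon^{-1/2}>\varphi_1$. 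Its $\varepsilon$-neighborhood coincides with $T_1(\Gamma,\varepsilon)$ (the distance to a set equals the distance to its closure), and its length equals $\mbox{\rm length}(\Gamma(\varphi_1,\varepsilon^{-1/2}))$.

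Before invoking Lemma \ref{|Ge|<} I would verify its hypothesis that $G$ is simple. Two points of $G$ can coincide only if their polar angles differ by a multiple of $2\pi$ and the corresponding radii agree; but the standing assumption $0<f(\varphi)-f(\varphi+2\pi)$ makes $f$ strictly decreasing along each $2\pi$-step, so distinct angles $\varphi$ and $\varphi+2k\pi$ with $k\ge 1$ give distinct radii $f(\varphi)>f(\varphi+2k\pi)$. Hence $G$ is simple and Lemma \ref{|Ge|<} applies to it.

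The remaining computation is then immediate. Applying Lemma \ref{|Ge|<} gives
$$|T_1(\Gamma,\varepsilon)|\le 4\pi\varepsilon\,\mbox{\rm length}(\Gamma(\varphi_1,\varepsilon^{-1/2}))+4\pi\varepsilon^2,$$
and the length hypothesis of Theorem \ref{criterion_one}, evaluated at $\varphi=\varepsilon^{-1/2}$, yields
$$\mbox{\rm length}(\Gamma(\varphi_1,\varepsilon^{-1/2}))\le M\log\varepsilon^{-1/2}=-\tfrac{M}{2}\log\varepsilon.$$
Substituting produces $|T_1(\Gamma,\varepsilon)|\le -2\pi M\varepsilon\log\varepsilon+4\pi\varepsilon^2$, which is the claim; note that $-\log\varepsilon>0$ since $\varepsilon\in(0,1)$, so the leading term is genuinely nonnegative.

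I do not anticipate a substantive obstacle: the statement is precisely the logarithmic analogue of Lemma \ref{|T|<}, and the only points requiring care are checking that Lemma \ref{|Ge|<} is applicable (simplicity of the truncated curve) and the elementary identity $\log\varepsilon^{-1/2}=-\tfrac12\log\varepsilon$. The mild conceptual subtlety is that, unlike the setting of Lemma \ref{|T|<}, the truncation threshold is taken to be $\varepsilon^{-1/2}$ rather than $\varphi_2(\varepsilon)$; this choice is what converts the $M\log\varphi$ length bound into the desired $\varepsilon\log\varepsilon$ term and will later balance against the $O(\varepsilon)$ estimate for $|N_1(\Gamma,\varepsilon)|$ from Lemma \ref{|N1|<}.
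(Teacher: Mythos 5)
Your proof is correct and is essentially identical to the paper's: both apply Lemma \ref{|Ge|<} to the truncated curve $\Gamma(\varphi_1,\varepsilon^{-1/2})$ and then insert the length hypothesis of Theorem \ref{criterion_one} at $\varphi=\varepsilon^{-1/2}$, using $\log\varepsilon^{-1/2}=-\tfrac12\log\varepsilon$. Your additional checks (simplicity of the truncated curve via $f(\varphi)>f(\varphi+2\pi)$, and $\varepsilon^{-1/2}>\varphi_1$) are details the paper leaves implicit, but they do not change the argument.
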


\begin{proof}
 By Lemma \ref{|Ge|<}, we find that
 \begin{align*}
  |T_1(\Gamma,\varepsilon)| 
   & \le 4 \pi \varepsilon
   \,\mbox{length}(\Gamma(\varphi_1,\varepsilon^{-1/2})) 
   + 4\pi \varepsilon^2 \\
   & \le 4 \pi M \varepsilon
   \log \varepsilon^{-1/2} + 4\pi \varepsilon^2 \\
   & = -2 \pi M  \varepsilon \log \varepsilon + 4\pi \varepsilon^2.
 \end{align*} 
\end{proof}

The following inequality has been obtained in Tricot \cite[\S 9.1]{Tri}.

\begin{lem}\label{|Ge|>}
 Let $G$ be a curve in ${\bf R}^2$ and let $\mbox{\rm diam}(G)$ be the
 largest distance between each two points in $G$, that is
 \[
  \mbox{\rm diam}(G) = \sup_{z, w \in G} d(z,w).
 \]
 Assume that $\mbox{\rm diam}(G)<\infty$.
 Then,
 \[
  |G_\varepsilon| \ge 2 \varepsilon \,\mbox{\rm diam}(G) + \pi \varepsilon^2.
 \]
\end{lem}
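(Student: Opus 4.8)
The plan is to prove the lower bound by a direct Cavalieri (slicing) argument along the direction that realizes the diameter. Since $\mathrm{diam}(G)$ is a supremum that need not be attained, I would first reduce to a single pair of points: choose $z_n, w_n \in G$ with $D_n := d(z_n, w_n) \to \mathrm{diam}(G)$, establish the bound $|G_\varepsilon| \ge 2\varepsilon D_n + \pi \varepsilon^2$ for each fixed pair, and then let $n \to \infty$. So fix such a pair $z, w$ with $D = d(z,w)$, and rotate and translate coordinates so that $z = (0,0)$ and $w = (D,0)$; write a generic point of the plane as $(u,v)$, with $u$ the coordinate along the line through $z$ and $w$.

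The key geometric input is that a curve is connected, so its orthogonal projection onto the $u$-axis is a connected subset of ${\bf R}$ containing $0$ and $D$, hence contains the whole interval $[0,D]$. Consequently, for every $u \in [0,D]$ there is a point $(u,v_0) \in G$, and the vertical segment $\{(u,v) : |v - v_0| \le \varepsilon\}$ lies in $G_\varepsilon$; thus the slice $\{v : (u,v) \in G_\varepsilon\}$ has one-dimensional measure at least $2\varepsilon$. On the two flanking ranges I would instead use the closed $\varepsilon$-disks about the endpoints: for $u \in [-\varepsilon, 0]$ the disk about $z$ forces the slice to contain $\{v : |v| \le \sqrt{\varepsilon^2 - u^2}\}$, and symmetrically for $u \in [D, D+\varepsilon]$ the disk about $w$ forces it to contain $\{v : |v| \le \sqrt{\varepsilon^2 - (u-D)^2}\}$.

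Assembling these by Fubini, integrating the slice length over the three $u$-ranges $[-\varepsilon,0]$, $[0,D]$, $[D,D+\varepsilon]$, gives
\[
 |G_\varepsilon| \ge \int_{-\varepsilon}^{0} 2\sqrt{\varepsilon^2 - u^2}\,du
 + \int_0^{D} 2\varepsilon\,du
 + \int_{D}^{D+\varepsilon} 2\sqrt{\varepsilon^2 - (u-D)^2}\,du
 = 2\varepsilon D + \pi \varepsilon^2,
\]
since each flanking integral is the area of a half-disk of radius $\varepsilon$, equal to $\pi\varepsilon^2/2$. Letting $D = D_n \to \mathrm{diam}(G)$ then yields the claimed inequality.

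The one point requiring care is precisely this bookkeeping: the three $u$-ranges overlap only in endpoints of measure zero, and in each range I use only a \emph{lower} bound on the one-dimensional slice, so nothing is double counted even if, for small $D$, the disks about $z$ and $w$ meet each other or overlap the central tube in the plane. In particular the argument never needs the three planar regions to be disjoint, only the slices over disjoint values of $u$, and hence the inequality holds for all $\varepsilon > 0$ with no constraint relating $\varepsilon$ to $D$. The other mild subtlety, that the diameter is a supremum possibly not attained, is handled by the limiting step described above. Since the statement is quoted from Tricot, this slicing argument would be recorded only for completeness.
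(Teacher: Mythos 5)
Your argument is correct, but there is no in-paper proof to compare it against: the paper does not prove this lemma at all, it simply quotes it from Tricot \cite[\S 9.1]{Tri}. Your Cavalieri/slicing proof is essentially the classical argument behind Tricot's inequality: rotate so that a (near-)diametral pair lies on the $u$-axis, use connectedness of the curve to see that its projection contains the whole interval $[0,D]$, bound each slice of $G_\varepsilon$ from below by $2\varepsilon$ over $[0,D]$ and by the half-disk slices on the two flanking ranges $[-\varepsilon,0]$ and $[D,D+\varepsilon]$, and integrate. The points you single out as requiring care are indeed the right ones, and you handle them correctly: $G_\varepsilon$ is closed (it is the preimage of $[0,\varepsilon]$ under the continuous function $d(\cdot,G)$ given in \eqref{Ge}), so Tonelli's theorem applies; the three $u$-ranges overlap only in sets of measure zero and you use only per-slice lower bounds, so no disjointness of the disks and the tube in the plane is needed, and the inequality holds for every $\varepsilon>0$; and the supremum defining $\mbox{\rm diam}(G)$ need not be attained, which your limiting step $D_n \to \mbox{\rm diam}(G)$ takes care of. This last point is not pedantry in the present context: the paper applies the lemma to the spiral $\Gamma$, which is bounded and connected but not compact, so a proof that presupposes a diametral pair exists would have a genuine (if easily repaired) gap. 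In short, your proposal supplies a complete, self-contained proof of a fact the paper only cites.
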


Now, we give a proof of Theorem \ref{criterion_one}.

\begin{proof}[Proof of Theorem \ref{criterion_one}]
 Since the distance between two points
 \[
  (f(\varphi_1)\cos\varphi_1,f(\varphi_1)\sin \varphi_1)
 \]
 and
 \[
  (f(\varphi_1+\pi)\cos(\varphi_1+\pi),f(\varphi_1+\pi)\sin(\varphi_1+\pi))
 \]
 is equal to $f(\varphi_1)+f(\varphi_1+\pi)$,
 we have
 \[
  \mbox{\rm diam}(\Gamma) \ge f(\varphi_1)+f(\varphi_1+\pi).
 \]
 Hence, from Lemma \ref{|Ge|>}, it follows that
 \begin{align*}
  |\Gamma_\varepsilon|
  \ge 2 \varepsilon \,\mbox{\rm diam}(\Gamma) + \pi \varepsilon^2 
  \ge 2 (f(\varphi_1)+f(\varphi_1+\pi)) \varepsilon,
 \end{align*}
 which implies that
 \begin{align*}
  \liminf_{\varepsilon\to+0}
  \frac{\log|\Gamma_\varepsilon|}{\log \varepsilon}
  & \ge \liminf_{\varepsilon\to+0}
  \frac{\log (f(\varphi_1)+f(\varphi_1+\pi))\varepsilon}
  {\log \varepsilon} \\
  & = \liminf_{\varepsilon\to+0} \left(
  \frac{\log (f(\varphi_1)+f(\varphi_1+\pi))}{\log \varepsilon} + 1 \right) = 1.
 \end{align*}
 By Lemmas \ref{|N1|<} and \ref{|T1|<}, we conclude that
 \begin{align*}
  |\Gamma_\varepsilon|
   & \le |T_1(\Gamma,\varepsilon)| + |N_1(\Gamma,\varepsilon)| \\
   & \le -2 \pi M  \varepsilon \log \varepsilon + 4\pi \varepsilon^2
      + \pi (\overline{m} + 1)^2 \varepsilon \\
   & = [-2 \pi M \log \varepsilon + 4\pi \varepsilon
     + \pi (\overline{m} + 1)^2] \varepsilon \\
   & \le [-2 \pi M \log \varepsilon + 4\pi
   + \pi (\overline{m} + 1)^2] \varepsilon,
 \end{align*}
 since $\varepsilon \in (0,1)$.
 Therefore,
 \[
  |\Gamma_\varepsilon| \le (-c_1 \log \varepsilon + c_2) \varepsilon
 \]
 for some $c_1>0$ and $c_2>0$, which implies that
 \begin{align*}
  \limsup_{\varepsilon\to+0}
  \frac{\log|\Gamma_\varepsilon|}{\log \varepsilon}
  & \le \limsup_{\varepsilon\to+0}
     \frac{\log(-c_1 \log \varepsilon + c_2) \varepsilon}
          {\log\varepsilon} \\
  & = \limsup_{\varepsilon\to+0}
   \left(
    \frac{\log(-c_1 \log \varepsilon + c_2)}{\log\varepsilon} + 1
  \right)
  = 1.
 \end{align*}
 Consequently, $\dim_{\rm B} \Gamma = 1$.
\end{proof}

\begin{proof}[Proof of Corollary \ref{criterion_one_cor}]
 Let $\varphi \ge \varphi_1$ be fixed.
 By the same argument as in the proof of Corollary \ref{cor_criterion},
 we find that $0 < f(\varphi) - f(\varphi+2\pi)$.
 We observe that
 \begin{align*}
  \mbox{\rm length}(\Gamma(\varphi_1,\varphi)) 
  & = \int_{\varphi_1}^\varphi \sqrt{(f(\psi))^2+(f'(\psi))^2} d\psi \\
  & \le \int_{\varphi_1}^\varphi
    \sqrt{(\overline{m}\psi^{-1})^2+(K\psi^{-1})^2} d\psi \\
  & = \sqrt{\overline{m}^2+K^2} \int_{\varphi_1}^\varphi \psi^{-1} d\psi \\
  & = \sqrt{\overline{m}^2+K^2} ( \log \varphi - \log \varphi_1 ) \\
  & \le \sqrt{\overline{m}^2+K^2} \log \varphi,
 \end{align*}
 since $\varphi_1>1$.
 Applying Theorem \ref{criterion_one}, we conclude that $\dim_{\rm B}\Gamma=1$.
\end{proof}


\section{Box-counting dimension of solution curves}

In this section, we give proofs of Theorems \ref{F} and \ref{F1}.

For each solution $(x(t),y(t))$ of \eqref{S}, we use the following notation:
\[
 r(t) = \sqrt{|x(t)|^2 + |y(t)|^2}.
\]
The following Lemmas \ref{V}, \ref{r'theta'} and \ref{h->0} have been
obtained in
\cite[Lemmas 2.2, 3.1 and 4.2]{OT2017}.

\begin{lem}\label{V}
 Let $(x(t),y(t))$ be a nontrivial solution of \eqref{S}.
 Assume that \eqref{HW} is satisfied.
 Then, there exist a constant $C>0$ and a function $\delta \in C[t_0,\infty)$
 such that $\lim_{t\to\infty}\delta(t)=0$ and
 \begin{equation*}
  [r(t)]^2 = e^{-H(t)} [C + \delta(t)], \quad t \ge t_0.
 \end{equation*}
\end{lem}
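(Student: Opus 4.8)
The plan is to strip off the damping via the Liouville transformation, reducing \eqref{E} to a summable perturbation of the harmonic oscillator. Writing $x(t) = e^{-H(t)/2} u(t)$ and using $H'=h$, a direct computation turns $x'' + h x' + x = 0$ into $u'' + q(t) u = 0$ with $q(t) = 1 - \tfrac14\bigl(2h'(t) + h(t)^2\bigr)$, so by \eqref{HW} the perturbation is integrable: $\int_{t_0}^\infty |q(t)-1|\,dt = \tfrac14\int_{t_0}^\infty |2h'+h^2|\,dt < \infty$. Since $y = x' = e^{-H/2}\bigl(u' - \tfrac{h}{2}u\bigr)$, I would record the exact identity
\[
 [r(t)]^2 e^{H(t)} = u^2 + (u')^2 - h\,u u' + \tfrac{h^2}{4}u^2 = E(t) - h\,u u' + \tfrac{h^2}{4}u^2,
\]
where $E(t) := u(t)^2 + u'(t)^2$ is the energy of the reduced equation. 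The target $[r(t)]^2 = e^{-H(t)}[C+\delta(t)]$ then reduces to showing that $E(t)$ tends to a positive constant and that the two correction terms vanish as $t\to\infty$.

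For the energy, differentiating along $u'' = -qu$ gives $E' = 2u'(u''+u) = 2uu'(1-q)$, so $|(\log E)'| = |E'|/E \le |1-q|$ because $2|uu'| \le u^2 + (u')^2 = E$. As the solution is nontrivial, $u$ and $u'$ cannot vanish simultaneously, hence $E(t)>0$ for all $t$; then $\int_{t_0}^\infty |1-q|\,dt < \infty$ forces $\log E$ to have bounded variation and therefore to converge. Thus $E(t)\to C$ for some finite $C>0$, and in particular $u$ and $u'$ are bounded on $[t_0,\infty)$.

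It remains to kill the correction terms, for which it suffices to prove $h(t)\to 0$. Integrating the identity $2h' = -h^2 + (2h'+h^2)$ yields
\[
 2h(t) + \int_{t_0}^t h(s)^2\,ds = 2h(t_0) + \int_{t_0}^t \bigl(2h'(s)+h(s)^2\bigr)\,ds,
\]
and by \eqref{HW} the right-hand side converges as $t\to\infty$. Since $h>0$, both nonnegative terms on the left are bounded, so $\int_{t_0}^\infty h^2 < \infty$ and $h(t)$ converges; the only limit compatible with $\int h^2 < \infty$ is $0$, so $h(t)\to 0$. Combined with the boundedness of $u,u'$ this gives $h\,u u' \to 0$ and $\tfrac{h^2}{4}u^2 \to 0$, whence $[r(t)]^2 e^{H(t)} \to C>0$. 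Setting $\delta(t) := [r(t)]^2 e^{H(t)} - C$ produces a continuous function with $\delta(t)\to 0$ satisfying the stated identity for all $t\ge t_0$.

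The step I expect to be the main obstacle is the convergence of $E$ to a \emph{strictly positive} limit: the logarithmic-derivative bound controls the total variation of $\log E$, and the point is precisely that bounded variation of $\log E$ keeps $E$ away from $0$ (a pure $L^1$ bound on $E'$ would not). The secondary technical point, extracting $h\to 0$ from \eqref{HW}, is handled cleanly by the Riccati-type integral identity above, which simultaneously delivers $\int_{t_0}^\infty h^2 < \infty$.
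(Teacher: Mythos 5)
Your proof is correct, but there is a wrinkle in the comparison: this paper never proves Lemma \ref{V} at all. It is imported from \cite[Lemma 2.2]{OT2017} (see the sentence introducing Lemmas \ref{V}, \ref{r'theta'} and \ref{h->0}), so the only ``proof'' in the text is a citation, and yours is the only self-contained argument on the table. Checking your steps: the Liouville substitution $x=e^{-H/2}u$ does yield $u''+qu=0$ with $q=1-\tfrac14\bigl(2h'+h^2\bigr)$, so \eqref{HW} gives $\int_{t_0}^{\infty}|1-q|\,dt<\infty$; the identity $[r(t)]^2e^{H(t)}=E(t)-h\,uu'+\tfrac{h^2}{4}u^2$ with $E=u^2+(u')^2$ is exact because $y=x'$; the estimate $|(\log E)'|\le|1-q|$ is legitimate since $E(t)>0$ for all $t$ (if $u$ and $u'$ vanished simultaneously, uniqueness for $u''+qu=0$ would force $u\equiv 0$, hence $x\equiv 0$), and it is indeed the multiplicative control on $E$, not a mere $L^1$ bound on $E'$, that keeps the limit $C=\lim E$ strictly positive and finite while also bounding $u,u'$; finally, integrating $2h'=(2h'+h^2)-h^2$ and using $h>0$ correctly yields $\int_{t_0}^{\infty}h^2\,dt<\infty$ and $h(t)\to 0$, which kills the correction terms, so $\delta(t):=[r(t)]^2e^{H(t)}-C$ is continuous, tends to $0$, and satisfies the claimed identity. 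Two observations: first, your final step is precisely Lemma \ref{h->0}, which the paper likewise only cites from \cite[Lemma 4.2]{OT2017}, so your write-up establishes two of the three imported lemmas in one pass; second, since condition \eqref{HW} is visibly tailored to the Liouville reduction, your route is almost certainly the intended one behind the cited result, though that cannot be verified from the text supplied here.
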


\begin{lem}\label{r'theta'}
 Let $(x(t),y(t))$ be a nontrivial solution of \eqref{S}.
 If $x(t)=r(t)\cos \theta(t)$ and $y(t)=r(t)\sin \theta(t)$, then
 \begin{equation*}
  \left\{
   \begin{array}{l}
    r'(t) = -h(t) r(t) \sin^2 \theta(t), \\[1ex]
     \theta'(t) = -1 - \displaystyle\frac{1}{2} h(t) \sin 2 \theta(t). 
   \end{array}
	\right.
 \end{equation*}
\end{lem}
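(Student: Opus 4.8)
The plan is to derive both identities by a direct computation in polar coordinates, using only the representation $x(t)=r(t)\cos\theta(t)$, $y(t)=r(t)\sin\theta(t)$ and the system \eqref{S}. The only preliminary point is that a nontrivial solution never meets the origin, so $r(t)>0$ for all $t\ge t_0$; consequently the argument $\theta(t)$ can be chosen as a well-defined $C^1$ branch, and division by $r(t)$ is legitimate throughout.

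First I would treat the amplitude. Differentiating $r^2=x^2+y^2$ gives $r r'=x x'+y y'$, and substituting $x'=y$ and $y'=-x-hy$ from \eqref{S} yields
\[
 r r' = xy + y(-x-hy) = -h\,y^2 = -h\,r^2\sin^2\theta .
\]
Dividing by $r>0$ produces the first identity $r'=-h\,r\sin^2\theta$.

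Next I would compute the angular velocity. Differentiating the polar representation gives
\[
 x' = r'\cos\theta - r\theta'\sin\theta, \qquad
 y' = r'\sin\theta + r\theta'\cos\theta .
\]
Forming the combination $y'\cos\theta - x'\sin\theta$ cancels the $r'$ terms and leaves $r\theta' = y'\cos\theta - x'\sin\theta$. Substituting $x'=y=r\sin\theta$ and $y'=-x-hy=-r\cos\theta-h\,r\sin\theta$, then simplifying with $\cos^2\theta+\sin^2\theta=1$, gives $r\theta' = -r - h\,r\sin\theta\cos\theta$. Dividing by $r$ and applying the double-angle identity $2\sin\theta\cos\theta=\sin 2\theta$ yields $\theta' = -1 - \tfrac{1}{2}h\sin 2\theta$, which is the second identity.

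I do not expect a genuine obstacle: the argument is an elementary differentiation and rearrangement. The only step requiring care is the justification that $\theta(t)$ is differentiable, which reduces to the standard fact, already noted in the introduction, that every nontrivial solution of \eqref{S} stays away from the origin, so that $r(t)>0$ and a smooth angular coordinate exists.
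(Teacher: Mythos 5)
Your proof is correct and complete. Note that the paper itself gives no proof of this lemma: it is imported verbatim from \cite[Lemma 3.1]{OT2017}, and your direct computation --- differentiating $r^2=x^2+y^2$ for the radial equation and forming $y'\cos\theta-x'\sin\theta$ (equivalently $(xy'-yx')/r^2$) for the angular one --- is the standard derivation that such a reference would contain. You also correctly handled the one delicate point, namely that a nontrivial solution satisfies $r(t)>0$ for all $t\ge t_0$ (by uniqueness for \eqref{S}, as noted in the paper's introduction), so that division by $r$ is legitimate and a $C^1$ branch of $\theta$ exists.
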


\begin{lem}\label{h->0}
 If \eqref{HW} is satisfied, then $\lim_{t\to\infty} h(t)=0$. 
\end{lem}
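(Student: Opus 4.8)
The plan is to exploit the algebraic identity $2h'(t) = [\,2h'(t)+|h(t)|^2\,] - h(t)^2$ together with the standing positivity $h(t)>0$. Write $g(t) = 2h'(t) + |h(t)|^2 = 2h'(t) + h(t)^2$; by \eqref{HW} we have $g \in L^1[t_0,\infty)$, so $\int_{t_0}^t g(s)\,ds$ converges to a finite limit as $t\to\infty$, and in particular this integral is bounded on $[t_0,\infty)$. Integrating the relation $2h' = g - h^2$ from $t_0$ to $t$ yields
\[
 2h(t) = 2h(t_0) + \int_{t_0}^t g(s)\,ds - \int_{t_0}^t h(s)^2\,ds .
\]

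First I would show that $h \in L^2[t_0,\infty)$. The function $t \mapsto \int_{t_0}^t h(s)^2\,ds$ is nondecreasing, hence has a (possibly infinite) limit. If that limit were $+\infty$, the right-hand side of the displayed identity would tend to $-\infty$, since the first two terms stay bounded; this would force $h(t)\to-\infty$, contradicting $h(t)>0$. Therefore $\int_{t_0}^\infty h(s)^2\,ds < \infty$.

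With both integrals on the right now convergent, the identity shows that $\lim_{t\to\infty} h(t)$ exists and is finite; call it $L$. Since $h(t)>0$ we have $L \ge 0$. If $L>0$, then $h(t)^2 \ge (L/2)^2$ for all sufficiently large $t$, which would give $\int_{t_0}^\infty h^2 = \infty$, contradicting the previous step. Hence $L=0$, that is, $\lim_{t\to\infty} h(t)=0$.

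I do not anticipate a genuine obstacle here: once the Riccati-type relation is integrated, the conclusion follows purely from the monotonicity of $\int h^2$ and the sign constraint $h>0$. The only point requiring care is the logical ordering—one must first extract $h\in L^2$ using positivity, and only then conclude that the limit exists and rule out a positive limit by invoking $L^2$-integrability a second time.
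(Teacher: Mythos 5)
Your proof is correct. Note, however, that the paper does not prove Lemma \ref{h->0} at all: it is imported from \cite{OT2017} (Lemma 4.2 there), so there is no internal argument to compare yours against. Your argument stands on its own and is sound at every step: condition \eqref{HW} gives absolute convergence, hence boundedness, of $\int_{t_0}^t \bigl(2h'(s)+h(s)^2\bigr)\,ds$; integrating $2h' = \bigl(2h'+h^2\bigr) - h^2$ and using $h>0$ then forces $\int_{t_0}^\infty h(s)^2\,ds < \infty$, since otherwise the integrated identity would drive $h(t)$ to $-\infty$; with both integrals convergent the identity shows $h(t)$ has a finite limit $L \ge 0$, and $L>0$ would contradict $h \in L^2[t_0,\infty)$. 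The logical ordering you flag (first extract $h\in L^2$ from positivity, only then conclude existence of the limit and rule out $L>0$) is indeed the one delicate point, and you handle it correctly. What your argument buys over the paper's citation is self-containedness: a reader gets an elementary, purely calculus-level proof of the lemma without consulting \cite{OT2017}.
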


\begin{proof}[Proof of Theorem \ref{F}]
 Let $(x(t),y(t))$ be a nontrivial solution of \eqref{S}.
 We note that \eqref{H(oo)=oo} holds, by \eqref{H(t)=2alog}. 
 From Theorem A, it follows that
 $\lim_{t\to\infty}x(t)=\lim_{t\to\infty}y(t)=0$,
 $(x(t),y(t))$ is a spiral, rotating in a
 clockwise direction on $[t_1,\infty)$ for some $t_1 \ge t_0$
 and $\Gamma_{(x,y;t_0)}$ is simple.
 By l'Hopital's rule and Lemmas \ref{r'theta'} and \ref{h->0}, we have
 \begin{equation}\label{theta'->-1}
  \lim_{t\to\infty} \frac{\theta(t)}{t} = \lim_{t\to\infty} \theta'(t)= -1.
 \end{equation}
 Since
 \[
  t^\alpha r(t) = t^\alpha e^{-H(t)/2} \sqrt{e^{H(t)}[r(t)]^2}
  = e^{-\frac{1}{2}(H(t)-2\alpha\log t)} \sqrt{e^{H(t)}[r(t)]^2},
 \]
 Lemma \ref{V} and \eqref{H(t)=2alog} imply that
 \begin{equation}
  0 < \liminf_{t\to\infty} t^\alpha r(t) \le
      \limsup_{t\to\infty} t^\alpha r(t) < \infty.
   \label{0<tar<oo}    
 \end{equation} 
 By \eqref{theta'->-1}, \eqref{0<tar<oo} and \eqref{th(t)<}, there exist
 $t_2 \ge \max\{t_1,1\}$, $C_1>0$, $C_2>0$ and $C_3>0$ such that, for
 $t\ge t_2$,
 \begin{gather}
  - \frac{3}{2} t \le \theta(t) \le -\frac{1}{2} t,
  \label{<theta<} \\
  - \frac{3}{2} \le \theta'(t) \le -\frac{1}{2},
  \label{<theta'<} \\
  C_1 \le t^\alpha r(t) \le C_2, 
  \label{<tar<} \\
  th(t) \le C_3.
  \label{th(t)<C}
 \end{gather}
 In view of \eqref{<theta<}, we note that $\lim_{t\to\infty} \theta(t)=-\infty$.
 Set $\eta(t)=-\theta(t)$.
 Then $\eta$ is positive and strictly increasing on $[t_2,\infty)$.
 Hence, $\eta$ has the inverse function $\eta^{-1}$.
 Set $\varphi_2=\eta(t_2)>0$ and $f(\varphi)=r(\eta^{-1}(\varphi))$ on
 $[\varphi_2,\infty)$.
 Since $\lim_{t\to\infty}x(t)=\lim_{t\to\infty}y(t)=0$,
 we have $\lim_{t\to\infty}r(t)=0$, and hence,
 $\lim_{\varphi\to\infty}f(\varphi)=0$.
 From \eqref{<theta<} and \eqref{<tar<}, it follows that
 \begin{align*}
  \varphi^\alpha f(\varphi)
  = \varphi^\alpha r(\eta^{-1}(\varphi))
  = (\eta(t))^\alpha r(t) 
  & = \left( \frac{-\theta(t)}{t} \right)^\alpha t^\alpha r(t) \\
  & \ge \frac{C_1}{2^\alpha}, \quad \varphi \ge \varphi_2,  
 \end{align*}
 where $t=\eta^{-1}(\varphi)$.
 By \eqref{<theta'<} and Lemma \ref{r'theta'}, we find that
 \begin{align}
  f'(\varphi) 
   & = r'(\eta^{-1}(\varphi)) \frac{1}{\eta'(\eta^{-1}(\varphi))}
       \label{f'=} \\
   & = -\frac{r'(t)}{\theta'(t)}  \nonumber \\
   & = \frac{h(t) r(t) \sin^2 \theta(t)}{\theta'(t)} \le 0, 
     \quad \varphi \ge \varphi_2, \nonumber
 \end{align} 
 where $t=\eta^{-1}(\varphi)$.
 We conclude that $f'(\varphi)\not\equiv 0$ on $[\varphi,\varphi+2\pi)$
 for each fixed $\varphi \ge \varphi_2$.
 Indeed, if $f'(\varphi)\equiv 0$ on $[\varphi,\varphi+2\pi)$
 for some $\varphi \ge \varphi_2$, then, by \eqref{f'=},
 $\sin^2 \theta(t) \equiv 0$ on
 $I:=[\eta^{-1}(\varphi),\eta^{-1}(\varphi+2\pi))$, that is,
 that $\theta'(t)\equiv 0$ on $I$, which contradicts \eqref{<theta'<}.
 Combining \eqref{<theta<}, \eqref{<tar<}, \eqref{th(t)<C} with
 \eqref{f'=}, we find that
 \begin{align*}
  - \varphi^{\alpha+1} f'(\varphi)
   & = (\eta(t))^{\alpha+1} \frac{h(t) r(t) \sin^2 \theta(t)}{-\theta'(t)} \\
   & = \left( \frac{-\theta(t)}{t} \right)^{\alpha+1}
     \frac{t^{\alpha+1}h(t) r(t) \sin^2 \theta(t)}{-\theta'(t)} \\
   & \le \left( \frac{3}{2} \right)^{\alpha+1} 2 C_2C_3,
     \quad \varphi \ge \varphi_2,
 \end{align*}
 where $t=\eta^{-1}(\varphi)$.
 Set
 \[
  \Gamma = \{ (f(\varphi)\cos \varphi, f(\varphi)\sin \varphi)
           : \varphi \ge \varphi_2 \}.
 \]
 Corollary \ref{cor_criterion} implies that $\dim_{\rm B} \Gamma =2/(1+\alpha)$.
 Since
 \begin{align*}
  \Gamma_{(x,-y;t_2)}
    & = \{ (x(t),-y(t)) : t \ge t_2 \} \\
    & = \{ (r(t)\cos \theta(t),-r(t)\sin \theta(t)) : t \ge t_2 \} \\
    & = \{ (r(\eta^{-1}(\varphi))\cos \theta(\eta^{-1}(\varphi)),
           -r(\eta^{-1}(\varphi))\sin \theta(\eta^{-1}(\varphi)))
            : \varphi \ge \varphi_2 \} \\
    & = \{ (f(\varphi)\cos (-\varphi),
            -f(\varphi)\sin (-\varphi))
            : \varphi \ge \varphi_2 \} \\
    & = \{ (f(\varphi)\cos \varphi, f(\varphi)\sin \varphi)
            : \varphi \ge \varphi_2 \} \\
    & = \Gamma,  
 \end{align*}
 we have $\dim_{\rm B} \Gamma_{(x,-y;t_2)}=2/(1+\alpha)$.
 Since, $\Gamma_{(x,y;t_2)}$ and $\Gamma_{(x,-y;t_2)}$ are symmetric, we
 conclude that
 \[
     \dim_{\rm B} \Gamma_{(x,y;t_2)}
   = \dim_{\rm B} \Gamma_{(x,-y;t_2)}
   = \dim_{\rm B} \Gamma
   = \frac{2}{1+\alpha}.
 \]
\end{proof}

\begin{proof}[Proof of Theorem \ref{F1}]
 Let $(x(t),y(t))$ be a nontrivial solution of \eqref{S}.
 Using \eqref{H(t)=2log}, we have \eqref{H(oo)=oo}.
 Hence, from Theorem A, it follows that
 $\lim_{t\to\infty}x(t)=\lim_{t\to\infty}y(t)=0$,
 $(x(t),y(t))$ is a spiral, rotating in a clockwise direction on
 $[t_1,\infty)$ for some $t_1 \ge t_0$ and $\Gamma_{(x,y;t_0)}$ is simple.
 By the same argument as in the proof of Theorem \ref{F} and noting
 Lemma \ref{h->0}, there exist
 $t_2 \ge \max\{t_1,1\}$, $C_1>0$, $C_2>0$ and $C_3>0$ such that
 \eqref{<theta<}, \eqref{<theta'<} and the following \eqref{<tr<} and
 \eqref{h(t)<C} hold for $t\ge t_2$:
 \begin{gather}
  C_1 \le t r(t) \le C_2, \label{<tr<} \\
  h(t) \le C_3. \label{h(t)<C}
 \end{gather}
 Set $\eta(t)=-\theta(t)$.
 Then, $\eta$ has the inverse function $\eta^{-1}$.
 Set $\varphi_2=\eta(t_2)>0$ and
 $f(\varphi)=r(\eta^{-1}(\varphi))$ on $[\varphi_2,\infty)$.
 Then, $\lim_{\varphi\to\infty}f(\varphi)=0$.
 We observe that 
 \begin{equation*}
  \varphi f(\varphi)
  = \varphi r(\eta^{-1}(\varphi))
  = \left( \frac{-\theta(t)}{t} \right) t r(t) 
  \le \frac{3C_2}{2}, \quad \varphi \ge \varphi_2,  
 \end{equation*}
 where $t=\eta^{-1}(\varphi)$.
 In the same way as in the poof of Theorem \ref{F}, using
 \eqref{<theta<}, \eqref{<theta'<}, \eqref{f'=}, \eqref{<tr<} and
 \eqref{h(t)<C}, we conclude that $f'(\varphi) \le 0$ for
 $\varphi \ge \varphi_2$, $f'(\varphi)\not\equiv 0$ on
 $[\varphi,\varphi+2\pi)$ for each fixed $\varphi \ge \varphi_2$, and that 
 \begin{equation*}
  - \varphi f'(\varphi)
    = \left( \frac{-\theta(t)}{t} \right)
     \frac{h(t) tr(t) \sin^2 \theta(t)}{-\theta'(t)} \\
    \le 3C_2C_3,
     \quad \varphi \ge \varphi_2,
 \end{equation*}
 where $t=\eta^{-1}(\varphi)$.
 Corollary \ref{criterion_one_cor} implies that
 $\dim_{\rm B} \Gamma =1$.
 Consequently, $\dim_{\rm B} \Gamma_{(x,y;t_2)}=1$.
\end{proof}

\end{document}